
\documentclass[11pt]{article}
\usepackage{amssymb}
\usepackage{amsmath}
\usepackage{amsthm}
\usepackage{latexsym}
\usepackage[english]{babel}
\usepackage{appendix}

\usepackage{hyperref}

\linespread{1.15}

 \textwidth = 14.8cm
 \textheight = 19.95cm

 \hoffset = -1.5cm
 \voffset = -1.1cm

\newtheorem{theorem}{Theorem}[section]

\newtheorem{lemma}{Lemma}[section]

\newtheorem{corollary}{Corollary}[section]

\def \de {\partial}

\def \O {\Omega}
\def \la {\langle}
\def \ra {\rangle}

\def \R {\mathbb{R}}

\def \LH {\Delta_\H}
\def \H {\mathbb{H}}
\def \Fa {F_{\alpha}}
\def \divH {\textnormal{div}_{H}}
\def \a {\alpha}

\newcommand{\Hn}{{\mathbb{H}^n}}

\begin{document}

\title{Overdetermined problems for gauge balls\\ in the Heisenberg group}

\author{Vittorio Martino$^{(1)}$ \& Giulio Tralli$^{(2)}$}
\addtocounter{footnote}{1}
\footnotetext{Dipartimento di Matematica, Universit\`a di Bologna, piazza di Porta S.Donato 5, 40126 Bologna, Italy. E-mail address:
{\tt{vittorio.martino3@unibo.it}}}
\addtocounter{footnote}{1}
\footnotetext{Dipartimento d'Ingegneria Civile e Ambientale (DICEA), Universit\`a di Padova, Via Marzolo 9, 35131 Padova, Italy. E-mail address: {\tt{giulio.tralli@unipd.it}}}

\date{}
\maketitle

\vspace{5mm}

{\noindent\bf Abstract} {\small In this paper we aim at characterizing the gauge balls in the Heisenberg group $\Hn$ as the only domains where suitable overdetermined problems of Serrin type can be solved. We discuss a one parameter family of overdetermined problems where both the source functions and the Neumann-like data are non-constant and they are related to the geometry of the underlying setting. The uniqueness results are established in the class of domains in $\Hn$ having partial symmetries of cylindrical type for any $n\geq 1$, and they are sharper in the lowest dimensional cases of $\H^1$ and $\H^2$ where we can respectively treat domains with $S^1$ and $S^1\times S^1$ invariances.}

\vspace{5mm}

\noindent
{\small Keywords: overdetermined problems, Heisenberg Laplacian, symmetry via integral identities.}

\vspace{4mm}

\noindent
{\small 2020 MSC. Primary: 35N25. Secondary: 35R03, 35B06.}   

\vspace{4mm}

\section{Introduction}

\noindent The main interest of the present work is to establish symmetry results for solutions of overdetermined problems in $\R^{2n+1}$ (with $n\geq 1$, and generic point $\xi=(x,t)\in\R^{2n}\times\R$) such as
\begin{equation}\label{modeloverdet}
\begin{cases}
\Delta_x u + 4|x|^2\de^2_{tt}u + 4\left\langle Jx, \nabla_x \de_t u\right\rangle   =(2n+2)\frac{|x|^2}{\sqrt{|x|^4+t^2}} \quad\text{ in } \O,  \\
u=0 \qquad\qquad\qquad\qquad\qquad\qquad\qquad\qquad\qquad\quad\quad\,\,\,\,\,\,\text{ on }\de \O,  \\
\left(\sum_{j=1}^{2n}(\de_{x_j}u+2(Jx)_j\de_tu)^2\right)^{\frac{1}{2}}= c \frac{|x|}{\left(|x|^4+t^2\right)^{\frac{1}{4}}} \qquad\qquad\,\,\,\,\,\text{ on }\de \O. 
\end{cases}
\end{equation}
Here, and in what follows, $c$ is a positive constant, $\la \cdot, \cdot\ra$ and $|\cdot|$ denote the standard inner product and norm in $\R^{2n}$, and $J$ stands for the following standard symplectic matrix in $\R^{2n}$
\begin{equation}\label{defJ}
J=\begin{pmatrix}
0 & -I_{n} \\
I_{n} & 0
\end{pmatrix}.
\end{equation}
Despite being a far-reaching (and maybe weird-looking) generalization of the classical overdetermined problem for the torsion function studied by Serrin in \cite{Ser}, the problem \eqref{modeloverdet} should first be looked with the eyes and the notations proper of the Heisenberg group. As we postpone to Subsection \ref{motiv} the description of our motivations for studying \eqref{modeloverdet}, we proceed in recalling the geometric setting and stating the main results.

\vskip 0.4cm

\noindent The Heisenberg group $\Hn$ can be identified with $\R^{2n+1}$ once we fix the group law
\begin{equation*}
\xi\circ \xi'=(x,t)\circ(x',t')=(x+x',t+t'+2\la Jx,x'\ra), \qquad \mbox{ for }  \xi,\xi' \in\Hn.
\end{equation*}
The $1$-parameter family of group homomorphisms $\{\delta_{\lambda}\}_{\lambda>0}$ defined as
\begin{equation*}
\delta_{\lambda}:\Hn\to\Hn, \qquad \delta_{\lambda}(x,t)=(\lambda x,\lambda^2 t)
\end{equation*}
plays the role of the homogeneous dilations, and it displays the non-isotropy of the setting under discussion. Such non-isotropy is also reflected by the so-called gauge function
\begin{equation}\label{defgauge}
\rho(\xi) =\left(|x|^4+t^2\right)^{\frac{1}{4}}, \qquad \xi \in \Hn.
\end{equation}
The function $\rho$ defined in \eqref{defgauge} sits at the center stage of our analysis. On one hand, it defines a norm which is $1$-homogeneous with respect to $\delta_\lambda$, and it endows $\Hn$ with the structure of a metric space where the metric balls centered at $\xi_0\in\Hn$ and radius $R>0$ are given by
\begin{equation}\label{defballs}
B_R(\xi_0)=\left\{ \xi \in \Hn \; : \; \rho(\xi_0^{-1}\circ\xi) < R \right\}.
\end{equation}
The balls $B_R(\xi_0)$ in \eqref{defballs} are called \emph{gauge balls} or \emph{Kor\'anyi balls} in the literature, as they first appeared in \cite{KV} in the study of singular integrals on homogeneous spaces. On the other hand, the gauge function is very much related with the sub-elliptic partial differential operator we work with. The canonical basis of left invariant vector fields on $\Hn$ is given by
\begin{equation*}
X_j=\frac{\partial}{\partial x_j}+2(Jx)_j\frac{\partial}{\partial t},\quad T=\frac{\partial}{\partial t}, \quad j=1,\dots,2n.
\end{equation*}
The vector fields $\{X_1,\ldots,X_{2n}\}$ are called horizontal: they are $1$-homogeneous with respect to $\delta_\lambda$ and they span the kernel of the contact form of the underlying CR structure. The second order partial differential operator defined by
\begin{equation}\label{LHdef}
\LH=\sum_{j=1}^{2n} X_j^2
\end{equation}
is called Heisenberg subLaplacian. It is a direct computation to recognize that
\begin{equation}\label{explicitDelta}
\LH=\Delta_x u + 4|x|^2\de^2_{tt}u + 4\left\langle Jx, \nabla_x \de_t u\right\rangle,
\end{equation}
namely the operator appearing in \eqref{modeloverdet}. From \eqref{LHdef} and the properties of the vector fields, it is clear that $\LH$ is left-invariant and $\delta_\lambda$-homogeneous of degree $2$. It is also hypoelliptic (even if never elliptic, at any point) thanks to the non-commutation relation $[X_j,X_i]=4J_{ij} T$. Since \cite[Theorem 2]{Fo73} it is well-known that the fundamental solution of $\Delta_{\H}$ is gauge-symmetric. More precisely, for any $\xi_0\in\Hn$ the function
\begin{equation}\label{fond}
\xi\mapsto \frac{\beta}{\rho^{Q-2}(\xi_0^{-1}\circ \xi)}\quad \mbox{is the fundamental solution of $\Delta_{\Hn}$ with pole at $\xi_0$,}
\end{equation}
where $\beta>0$ is a renormalizing constant. Here, as well as in what follows, we have indicated by
$$
Q=2n+2
$$
the homogeneous dimension of the group. Denoting by $D_H u$ the so-called horizontal gradient of a given function $u$ which is described by the $\R^{2n}$-vector
\begin{equation*}
D_H u=\left(X_1 u,\ldots, X_{2n} u\right),
\end{equation*}
a straightforward computation which exploits the skew-symmetry of the matrix $J$ ensures that
\begin{equation}\label{dhrho}
|D_H \rho (\xi)|=\frac{|x|}{\rho(\xi)}\qquad\mbox{ for any }\xi\in\Hn,\,\, \xi\not=0.
\end{equation}
The last notation we need concerns the nonnegative function
\begin{align}\label{deFa}
\Fa(\xi)&=|x|^2 \rho^{\alpha-4}(\xi)\\
&=|D_H \rho (\xi)|^2 \rho^{\alpha-2}(\xi),\notag
\end{align}
where $\alpha$ is a positive constant which will be settled in a moment in some range contained in $(0,4]$. We are thus ready to declare the overdetermined system depending on the parameter $\alpha$ which constitutes the core of our work
\begin{equation}\label{eq: overdet}
\begin{cases}
\LH u =(Q+\alpha-2)\Fa \quad\,\,\,\text{ in } \O,  \\
u=0 \qquad\qquad\qquad\qquad\,\,\,\,\,\text{ on }\de \O,  \\
|D_H u|= c\, \Fa^{\frac{1}{2}}\qquad\quad\,\,\,\,\,\,\,\,\,\,\,\,\,\, \text{ on }\de \O.  \\
\end{cases}
\end{equation}
We postpone to Section \ref{sec2} the discussion about the precise meaning of solution $u$ to the system \eqref{eq: overdet}, and for the assumptions on the class of bounded opens sets $\Omega$ where we seek to solve \eqref{eq: overdet}. We explicitly note that, according to our notations, the case $\alpha=2$ corresponds to the desired system \eqref{modeloverdet}. The case $\alpha=2$ occupies a special position also for another reason: as the function $\Fa$ is $\delta_\lambda$-homogeneous of degree $\alpha-2$ (see \eqref{dhrho}), in the $(\alpha=2)-$system \eqref{modeloverdet} both the source function and the Neumann-like datum are $\delta_\lambda$-homogeneous of degree $0$ (just like the constants) and in this respect \eqref{modeloverdet} represents a true generalization of the classical Serrin overdetermined problem for the torsion function.

\vskip 0.4cm

\noindent For any $\alpha>0$, a direct computation based on \eqref{fond}-\eqref{dhrho} shows that in $\Omega=B_{R}(0)$ for any $R>0$ the unique solution $u_\alpha$ (in the sense of Section \ref{sec2}) to \eqref{eq: overdet} is given by
\begin{equation}\label{ua}
u_\alpha(\xi)= \frac{\rho^{\alpha}(\xi) - R^\alpha}{\alpha}, \qquad\mbox{ and in such case }c=R^{\frac{\alpha}{2}}.
\end{equation}
In the particular case $\alpha=4$ the system \eqref{eq: overdet} inherits an extra degree of freedom as the function $F_{4}(\xi)=|x|^2$ is independent of the $t$-variable: this fact is responsible for the existence of gauge-symmetric solutions in $\Omega=B_{R}((0,t_0))$ for any $t_0\in\R$ and $R>0$ which are given by $u_4(\xi)=\frac{1}{4}(|x|^4+(t-t_0)^2 - R^4)$. Similar symmetry problems in $\Hn$ with weights depending just on $|x|$ (like $F_4$) have been recently addressed in a companion paper \cite{GMT}\footnote{the content of the present paper was in fact announced in \cite{GMT} in July 2022 (see reference [28] therein)}, where we proved that the gauge spheres $\de B_{R}((0,t_0))$ are the unique smooth closed hypersurfaces in $\Hn$ with horizontal mean curvature proportional to $|x|$ in the class of domains which are cylindrically symmetric with respect to the $t$-axis (see, e.g., \cite[Corollary 3.6]{GMT}). Here, in contrast with \cite{GMT}, we allow the weights $\Fa (x,t)$ to be possibly $t$-dependent via the presence of the function $\rho^{\alpha-4}(x,t)$.

\vskip 0.4cm

\noindent The aim of the present work is to prove that the gauge balls are the unique domains $\Omega$ in a class of competitor sets with cylindrical-type symmetries where \eqref{eq: overdet} admits a solution for some positive constant $c$ and some positive parameter $\alpha$ in a suitable range. The requirement for the domains to have an a-priori symmetry assumption is not new in the literature: due to the known difficulties in performing symmetrizations, moving plane techniques, and Bochner-type identities in sub-Riemannian settings, this type of axially symmetric assumption already appeared in a number of notable contributions by many authors such as \cite{JL87, GL90, BiPr, HL, RiRo, Mo, MTj} (to the best of our knowledge, the only symmetry results available in $\Hn$ without hypotheses concerning partial symmetries are the important classifications of global solutions/minima in \cite{JL88, BFM, FL, MaOu} strongly based on the underlying conformal invariances). More precisely, we shall say that a bounded open set $\Omega\subset\Hn$ is cylindrically symmetric (with respect to the $t$-axis) if
\begin{equation}\label{cylsim}
(x,t)\in\Omega \quad \Longrightarrow \quad (x', t)\in \Omega \quad\mbox{ for every $x'\in \R^{2n}$ with $|x'|=|x|$.}
\end{equation}
In this respect, our main result is the following

\begin{theorem}\label{Tuno}
Fix $n\geq 1$, and $\alpha\in (\alpha_n,4]$ where $\alpha_n = \frac{3}{4}$ for $n\geq 2$ and $\alpha_1=2$. Fix also $c>0$. Let $\Omega$ be a subset of $\Hn$ satisfying the assumptions in Section \ref{sec2}, and suppose that $\Omega$ is cylindrically symmetric. If there exists a solution $u$ to \eqref{eq: overdet}, then $\Omega$ is a gauge ball of radius $R=c^{\frac{2}{\alpha}}$.
\end{theorem}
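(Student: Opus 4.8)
The plan is to follow a Weinberger-type scheme adapted to the weighted sub-Riemannian setting, replacing the (unavailable) moving-plane machinery by integral identities. Exploiting the cylindrical symmetry \eqref{cylsim} I would first pass to the variables $(r,t)$ with $r=|x|$: any admissible solution becomes $u=u(r,t)$, the cross term in \eqref{explicitDelta} drops because $\langle Jx,x\rangle=0$, and $\LH$ reduces to the two-dimensional degenerate operator $\mathcal{L}u=u_{rr}+\frac{2n-1}{r}u_r+4r^2u_{tt}$, while $|D_H u|^2=u_r^2+4r^2u_t^2$ and $\Fa=r^2\rho^{\alpha-4}$ with $\rho=(r^4+t^2)^{1/4}$. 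The explicit model \eqref{ua} suggests introducing the $P$-function $P:=\frac{|D_H u|^2}{\Fa}-\alpha u$. A direct check using \eqref{ua} gives $P\equiv R^\alpha$ on a gauge ball, and for \emph{any} solution of \eqref{eq: overdet} the two boundary conditions yield $P\equiv c^2$ on $\de\O$. The whole strategy is to upgrade this to $P\equiv c^2$ throughout $\O$, since this rigidly identifies $u$ with $u_\alpha$.

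A structural tool I would isolate first is the auxiliary function $v:=Zu-\alpha u$, where $Z=\sum_j x_j\de_{x_j}+2t\de_t$ generates the dilations $\{\delta_\lambda\}$. Since $\LH$ is $\delta_\lambda$-homogeneous of degree $2$, the commutator relation $[Z,\LH]=-2\LH$ holds, so that $\LH(Zu)=Z(\LH u)+2\LH u$. Using $\LH u=(Q+\alpha-2)\Fa$ together with $Z\Fa=(\alpha-2)\Fa$ (as $\Fa$ is homogeneous of degree $\alpha-2$), this gives $\LH(Zu)=\alpha(Q+\alpha-2)\Fa=\alpha\,\LH u$, hence $v$ is $\LH$-harmonic in $\O$ and $v\equiv R^\alpha$ on the model. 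This harmonicity is meant to be the engine behind the integral identities, via Green's second formula for the pair $u,Zu$, and on the gauge ball one checks that $v$ and $P$ coincide.

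The heart of the matter is to show that $P$ is a subsolution of a suitable weighted version of $\mathcal{L}$. This rests on a sub-Riemannian Bochner identity of the schematic form $\frac12\LH|D_H u|^2=|D_H^2u|^2+\langle D_Hu,D_H\LH u\rangle+(\text{commutator terms})$, where the commutator terms carry the vertical derivative $Tu=\de_t u$ and constitute exactly the obstruction that makes Bochner techniques delicate in $\Hn$. Here the cylindrical symmetry is essential: it annihilates the off-diagonal contributions and reduces the troublesome part to a single quadratic form, whose sign I would analyse by hand. I expect this to be the main obstacle. The sharp range $\alpha\in(\alpha_n,4]$ should be precisely the condition under which that quadratic form, once divided by the weight $\Fa$ and balanced against the factor $2n-1$ coming from the radial part of $\mathcal{L}$, stays nonnegative; this is what is expected to produce both the dimensional threshold $\alpha_n=\frac34$ for $n\geq2$ and the more restrictive $\alpha_1=2$ in the degenerate case $n=1$.

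Once $P$ is known to be a subsolution and to equal the constant $c^2$ on $\de\O$, I would close the argument by a single integration: applying the weighted divergence theorem to $\mathcal{L}P$ over $\O$ and evaluating the resulting boundary flux through the overdetermined datum $|D_Hu|=c\,\Fa^{1/2}$ forces the relevant weighted integral of $\mathcal{L}P$ to vanish with a definite sign, whence $\mathcal{L}P\equiv0$ and $P\equiv c^2$ in $\O$. Tracing the equality back through the Bochner step then forces $D_H^2u$ into the rigid ``model'' structure, i.e. $u$ depends on $\xi$ only through $\rho$; integrating $\LH u=(Q+\alpha-2)\Fa$ for such a $u$ with $u=0$ on $\de\O$ yields $u=u_\alpha=\frac{\rho^\alpha-R^\alpha}{\alpha}$ and $\O=B_R(0)$, with $R^{\alpha/2}=c$, that is $R=c^{2/\alpha}$.
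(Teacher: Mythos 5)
Your overall skeleton --- the P-function $P=\frac{1}{\Fa}|D_H u|^2-\a u$, the $\LH$-harmonicity of $Zu-\a u$ feeding a Pohozaev-type identity, reduction to cylindrical variables, subharmonicity of $P$ together with the boundary value $c^2$, and a final rigidity step --- is exactly the strategy of the paper (Lemma \ref{pohazev}, Lemma \ref{lemmalemma}, Corollary \ref{corricorri}, Theorem \ref{Tunoalpha}). But there are two genuine gaps. First, the Bochner/sum-of-squares step is not carried out but only conjectured, and your expectation of what it yields is wrong in a way that matters: the paper's identity \eqref{cyln} exhibits $\frac{\Fa}{16}\LH P$ as a sum of squares which is nonnegative for \emph{every} $\a\in(0,4]$ and every $n\geq 1$, so the quadratic form never changes sign on that range and the thresholds $\a_n=\frac{3}{4}$ (for $n\geq2$) and $\a_1=2$ cannot come from there. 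Second --- and this is the crux you never address --- $P$ is singular on the $t$-axis $\{|x|=0\}$, where $\Fa$ vanishes; hence neither your ``single integration'' of $\LH P$ over $\O$ nor any maximum principle can be applied before controlling $P$ across this set. This is precisely where the thresholds originate: the paper proves (Lemma \ref{regularity}) that $\LH P$ is locally integrable across the axis when $n\geq2$ and $\a>\frac{3}{4}$, and that $P$ is locally bounded when $n=1$ and $\a>2$, and only with one of these properties can one justify the Green-identity computation $\int_\O(-u)\LH P\,d\xi=0$ (or invoke the maximum principle for $\LH$-polar sets) and conclude $\LH P\equiv0$ in $\O\smallsetminus\{|x|=0\}$. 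Without this ingredient your argument does not close, and the restriction on $\a$ in the statement remains unexplained by your scheme.

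A smaller inaccuracy: in the rigidity step, equality does not force $u$ to depend only on $\rho$ when $\a=4$. In that case the terms in \eqref{cyln} carrying the factor $4-\a$ disappear, equality only gives $M=\frac{{\rm trace}(M)}{n+1}\,\mathbb{I}_{n+1}$ together with $W_\sigma=\frac{1}{2}F_4$, and the resulting solution is $u=\frac{1}{4}\left(|x|^4+(t-t_0)^2-R^4\right)$ with an arbitrary $t_0\in\R$: the ball can be centered anywhere on the $t$-axis (Theorem \ref{Tuno4}), which is consistent with the statement claiming only that $\O$ is ``a gauge ball'' rather than $B_R(0)$.
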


For a refined statement of Theorem \ref{Tuno}, we refer the reader to Theorem \ref{Tunoalpha}, Corollary \ref{Cuno}, and Theorem \ref{Tuno4} below. In Theorem \ref{Tunoalpha} we deal with the characterization of the gauge balls centered at the origin $B_R(0)$ for every $\alpha$ in $(0,4)$ under an extra-assumption concerning the local behaviour of an auxiliary function (see Corollary \ref{Cuno} for the case $\alpha\in (\alpha_n,4)$), whereas in Theorem \ref{Tuno4} we deal with the characterization of the gauge balls centered on the $t$-axis $B_R((0,t_0))$ in case\footnote{concerning only $\alpha=4$, the content of Theorem \ref{Tuno4} is also discussed in a very recent preprint \cite{GVpreprint} where the authors provide a number of conjectures via some interesting and formal analogies with classical symmetry problems in $\R^N$ for possibly non-integers $N$} $\alpha=4$. The strategy we adopt for the proof of Theorem \ref{Tuno} follows the one pioneered by Weinberger in \cite{wei} for the classical Serrin overdetermined problem which is based on integral identities, maximum principles, and sharp matrix inequalities of Newton-type. The main difference with the elegant argument in \cite{wei}, as well as the main novelty of the present paper at a technical level, is that the scope of the proof is not to ensure that some Hessian matrix is a multiple of the identity (as it is not even true for the candidate solution $u_\alpha$ in \eqref{ua}). To specify better this difficulty, we should stress again the difference between the simpler situation $\alpha=4$ (for which the candidate solution $u_4$ possesses, at least in the cylindrical coordinates $(\sigma,t)\in \R^+\times\R$ with $\sigma=|x|^2$, an Hessian matrix which is equal to the identity) and the case $0<\alpha<4$ (for which the function $(\sigma^2+t^2)^{\frac{\alpha}{4}}$ is radial but of course not a quadratic polynomial). Therefore, in order to detect the gauge-symmetry of the solutions, we are going to construct a special function $v$ (a so-called P-function) which is singular on the $t$-axis and with the property that $\LH v$ measures the distance from the solution $u$ to the candidate solution $u_\alpha$ in terms of a rather long list of second-order and first-order quantities. Having this in mind, it is worth noticing that we manage to show the validity of Theorem \ref{Tuno} for a non-trivial interval of parameters $\alpha$ with a common approach \`a la Weinberger as it is known that the Weinberger's proof in $\R^N$ is very rigid with respect to the choice of the source function and the Neumann data (as far as we know, in the elliptic framework the interesting uniqueness results for overdetermined problems with position-dependent data are achieved with approaches very different than the one in \cite{wei}, see, e.g., \cite{BH, Br, Ono, DvEPs}). The crucial property that comes to rescue us is the fact that the a-priori symmetry assumptions allow us to work in a low-dimensional space and the approaches via integral identities and sharp inequalities are less rigid in $2$-dimensions (this phenomenon seems unnoticed in the literature for the Weinberger's proof, but it appears for other symmetry results in the plane such as \cite{CrSp, Lions}).

\vskip 0.4cm

\noindent With the intention of enlarging the class of $\Omega$ for which we have a uniqueness result, we are going to work with domains with some a-priori symmetry assumptions which are weaker than \eqref{cylsim}. We shall say that a bounded open set $\Omega\subset\Hn$ is toric symmetric (with respect to the symplectic matrix $J$ \eqref{defJ}, and the $t$-axis) if
\begin{align}\label{torsim}
(x,t)\in\Omega \quad \Longrightarrow \quad (x', t)\in \Omega \quad&\mbox{ for every $x'\in \R^{2n}$ such that }\\
&\mbox{$(x')^2_k+(x')^2_{n+k}=x^2_k+x^2_{n+k}$ with $k\in\{1,\ldots,n\}$.}\notag
\end{align}
For $n=1$, \eqref{torsim} coincides with the $S^1$-invariance described in \eqref{cylsim}. However, for $n\geq 2$, \eqref{torsim} constitutes a weaker requirement than \eqref{cylsim}: \eqref{torsim} is equivalent of saying that the sections $\{x\in\R^{2n}: (x,t)\in\Omega\}$ are Reinhardt domains in $\mathbb{C}^{n}=(\R^{2n},J)$ (and not just balls as for \eqref{cylsim}). In Section \ref{sec3} we are going to introduce the P-function $v$ associated to a solution of the overdetermined system \eqref{eq: overdet}, and we will see that $v$ is going to be effective as a detector of the gauge-symmetry not just in the case of cylindrically symmetric sets for any $n\geq 1$ but also in the case of toric symmetric sets in $\mathbb{H}^2$. We summarize the latter result in the following theorem, which we feel is close (at least in spirit) with the Aleksandrov-type theorem for the Levi curvature in the class of Reinhardt and circular domains in $\mathbb{C}^2$ obtained in \cite{HL, MTj}.  

\begin{theorem}\label{Tdue}
Fix $n=2$, $\alpha\in (\frac{3}{4},4]$, and $c>0$. Let $\Omega$ be a subset of $\mathbb{H}^2$ satisfying the assumptions in Section \ref{sec2}, and suppose that $\Omega$ is toric symmetric. If there exists a solution $u$ to \eqref{eq: overdet}, then $\Omega$ is a gauge ball of radius $R=c^{\frac{2}{\alpha}}$.
\end{theorem}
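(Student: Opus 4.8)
The plan is to run the Weinberger-type scheme behind Theorem \ref{Tuno} after a toric reduction, the only genuinely new point being the verification of the crucial pointwise inequality for the P-function in the three variables that survive the $S^1\times S^1$ symmetry of $\mathbb{H}^2$. First I would reduce to a toric symmetric solution: since the Dirichlet problem $\LH u=(Q+\a-2)\Fa$ in $\O$ with $u=0$ on $\de\O$ is uniquely solvable in the sense of Section \ref{sec2}, and since $\Fa$, $\LH$, and $\O$ are all invariant under the torus action preserving $x_1^2+x_3^2$ and $x_2^2+x_4^2$ separately, the solution $u$ inherits that invariance; thus $u=u(r_1,r_2,t)$ with $r_k=\sqrt{x_k^2+x_{k+2}^2}$. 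In these coordinates a direct computation exploiting the skew-symmetry of $J$ shows that the mixed term $\la Jx,\nabla_x\de_t u\ra$ vanishes identically, so that
\[
\LH u=\de^2_{r_1r_1}u+\tfrac{1}{r_1}\de_{r_1}u+\de^2_{r_2r_2}u+\tfrac{1}{r_2}\de_{r_2}u+4(r_1^2+r_2^2)\,\de^2_{tt}u,
\]
while $|D_H u|^2=(\de_{r_1}u)^2+(\de_{r_2}u)^2+4(r_1^2+r_2^2)(\de_t u)^2$ and $\Fa=(r_1^2+r_2^2)\,\rho^{\a-4}$ with $\rho=\big((r_1^2+r_2^2)^2+t^2\big)^{1/4}$. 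The candidate solution \eqref{ua} reads $u_\a=\frac{\rho^\a-R^\a}{\a}$ and is of course toric symmetric.

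Next I would invoke the P-function $v$ of Section \ref{sec3}, which is singular along the $t$-axis $\{r_1=r_2=0\}$ and is built so that $v\equiv c^2$ on the candidate $u_\a$ while $\LH v$ equals a weighted sum of squares of second- and first-order quantities measuring the deviation of $u$ from $u_\a$. The argument \`a la Weinberger then has two halves: a maximum principle giving $v\le c^2$ in $\O$ with equality on $\de\O$ (the boundary value being forced by the Neumann datum $|D_H u|=c\,\Fa^{\frac12}$ together with $u=0$), and a Pohozaev/Rellich-type integral identity which, exploiting both boundary conditions, forces the pointwise nonnegative defect $\LH v$ to vanish identically in $\O$. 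Some care is needed here, since the singularity of $v$ on the $t$-axis must be shown to be removable, or otherwise compatible with the maximum principle; I expect the toric symmetry, which makes $u$ regular as a function of $(r_1,r_2)$, to be exactly what tames this singularity.

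The heart of the matter, and the step I expect to be the main obstacle, is to prove that in the toric reduction above $\LH v$ is indeed a nonnegative combination of squared deviations for every $\a\in(\tfrac34,4]$. This rests on a sharp matrix inequality of Newton type applied to the reduced horizontal Hessian of $u$: the $S^1\times S^1$ symmetry removes all off-diagonal coupling and leaves a block structure in $(r_1,r_2,t)$, for which the relevant trace-versus-Frobenius-norm inequality can be checked directly. It is here that both constraints enter: the value $n=2$ fixes the number of blocks together with the weights $\tfrac{1}{r_k}$, while the threshold $\a>\tfrac34$ guarantees positivity of the coefficients in front of the squared deviations (the same threshold governing cylindrically symmetric domains with $n\ge2$ in Theorem \ref{Tuno}); the endpoint $\a=4$ is admissible because there $\Fa$ no longer depends on $t$. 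The low effective dimension produced by the symmetry is precisely what makes this inequality hold on a genuine interval of $\a$, in contrast with the rigidity of the original computation in \cite{wei}.

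Finally, the vanishing of the defect forces $u$ to depend on $(r_1,r_2,t)$ only through $\rho$, so that $u$ coincides with $u_\a$ (up to a vertical translation of the center when $\a=4$) and $\de\O=\{u=0\}$ is a level set of $\rho$. Reading off the Neumann condition gives $c=R^{\frac{\a}{2}}$, whence $\O$ is a gauge ball of radius $R=c^{\frac{2}{\a}}$ (centered at the origin when $\a<4$, and at some $(0,t_0)$ in the extra-symmetric case $\a=4$), which is the assertion of Theorem \ref{Tdue}.
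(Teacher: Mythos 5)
Your overall scheme is the same as the paper's: toric reduction, the P-function $v$ of \eqref{Pfunction}, a sum-of-squares identity for $\Fa\LH v$ governed by the Newton-type inequality \eqref{matrixineq}, then a maximum principle plus the Pohozaev-type identity of Lemma \ref{pohazev} (equivalently \eqref{medianulla}) forcing $\LH v\equiv 0$, and finally rigidity of the resulting first-order system (this is Lemma \ref{lemmalemma}, Corollary \ref{corricorri}, and Theorems \ref{Tduealpha}, \ref{Tdue4}); your reduced expressions for $\LH u$ and $|D_H u|^2$ in the variables $(r_1,r_2,t)$ are also correct. However, you have misidentified where the hypothesis $\a>\frac34$ enters, and this conceals a genuine gap. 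In the paper, the coefficients in the sum-of-squares identity \eqref{tordue} are nonnegative for \emph{every} $\a\in(0,4]$ (Corollary \ref{corricorri}): positivity of those coefficients is not what restricts the range of $\a$. The threshold $\a>\frac34$ comes from Lemma \ref{regularity}(ii): it is exactly the condition under which $\LH v$ is locally integrable across the singular axis $\{|x|=0\}$, the critical term being $\Fa^{-2}X_k\Fa\, X_kX_ju\, X_ju$, which is bounded by $|x|^{-3}|X_kX_ju|\,|X_ju|$ and is locally integrable (for $n\geq 2$, i.e. $Q=6$, via Calder\'on--Zygmund and Folland--Stein embeddings) precisely when $\a>\frac34$. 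This is a dimensional estimate, not a positivity constraint and not a consequence of symmetry.

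This matters because the step you flag with ``some care is needed'' --- crossing the $t$-axis singularity of $v$ --- is the actual crux, and the mechanism you propose (toric symmetry makes $u$ smooth in $(r_1,r_2)$, hence tames $v$) does not cover the claimed range. Since $\Fa$ vanishes on the axis, smoothness of $U$ alone does not control $v=\Fa^{-1}|D_Hu|^2-\a u$ there: the paper shows via Schauder theory (Lemma \ref{regularity}(iii)) that symmetry yields local boundedness of $v$ only for $\a\in(2,4]$. For $\a\in(\frac34,2]$ the paper's route is entirely different: from $\LH v\in L^1_{\mathrm{loc}}$ and pointwise subharmonicity off the axis one gets $\LH v\geq 0$ in the sense of distributions in all of $\O$, one passes to the subharmonic representative $\tilde v$, observes that $\de\O\cap\{|x|=0\}$ consists of finitely many points (they are characteristic, with normal $\de_t$), hence is $\LH$-polar, applies the polar-set maximum principle \cite[Theorem 11.2.7]{BLU} to get $\tilde v\leq c^2$, and only then invokes \eqref{medianulla} to conclude $v\equiv c^2$. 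Without this your plan proves the theorem at best for $\a\in(2,4]$. A second, smaller omission: at the endpoint $\a=4$ all the squares in \eqref{tordue} carrying the factor $4-\a$ disappear, so the vanishing of $\LH v$ gives only $M=\frac{{\rm trace}(M)}{3}\mathbb{I}_3$ and $U_1+U_2=F_4$; recovering $U_{11}=U_{12}=U_{22}=\frac12$ and $U_{tt}=\frac12$ (whence the center drift to $(0,t_0)$) requires the extra trace computation \eqref{traceformula} carried out in Theorem \ref{Tdue4}, not merely the remark that $F_4$ is $t$-independent.
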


As before, we refer the reader to Theorem \ref{Tduealpha}, Corollary \ref{Cdue}, and Theorem \ref{Tdue4} below for a refined statement of Theorem \ref{Tdue} accordingly to the values of $\alpha$.

\vskip 0.4cm

\noindent The paper is organized as follows. In Section \ref{sec2} we set the required hypotheses for the class of competitors $\Omega$, we settle the definition of solution to the system \eqref{eq: overdet}, and we derive some preliminary integral identities. In Section \ref{sec3} we introduce the P-function $v$ and we show a crucial pointwise identity involving $\LH v$ in case of domains with toric symmetries. We proceed by discussing both the subharmonicity and local integrability properties of $v$ with respect to the values of the parameter $\alpha$.  In Section \ref{sec4} we provide the complete details for the proof of Theorems \ref{Tuno} and \ref{Tdue}.

\subsection{Motivations}\label{motiv}

Our interest for studying the specific overdetermined problem in \eqref{modeloverdet} comes from a natural inverse problem in potential theory. It is in fact known since a celebrated paper by Gaveau \cite[Th\'eor\`eme 3]{Ga77} (see also the general treatment in \cite[Theorems 5.5.4 and Theorem 5.6.1]{BLU}) the validity of Gauss-Koebe type theorems in $\Hn$ stating that the pointwise value of every $\LH$-harmonic function can be represented as a weighted average of the values of $h$ on gauge balls $B_R$ and on gauge spheres $\de B_R$. More precisely, if we have a solution $h$ to $\Delta_{\Hn}h=0$ in $B_R(0)$ (which is continuous up to the closure of $B_R(0)$) we then have that
\begin{align}\label{mean}
h(0)&=\frac{Q(Q-2)\beta}{R^{Q}}\int_{ B_R(0)} h(\xi) |D_H\rho(\xi)|^2 d\xi\\
&=\frac{(Q-2)\beta}{R^{Q-1}}\int_{\de B_R(0)} h(\xi) \frac{|D_H\rho(\xi)|^2}{|D \rho(\xi)|} d\sigma(\xi).\notag
\end{align}
Here, and in what follows, we have indicated by $d\xi$ the Lebesgue measure, by $d\sigma (\xi)$ the surface measure for smooth hypersurfaces, and by $D$ the Euclidean gradient. The positive constant $\beta$ is the same constant appearing in \eqref{fond}. As a matter of fact, the validity of \eqref{mean} is mainly due to \eqref{fond} (being the gauge balls the superlevel sets of the fundamental solution) and to the variational structure of $\LH$ (being the horizontal vector fields $X_j$ divergence-free with respect to the Lebesgue measure), see in this respect also \cite{BoLa}. When we refer to inverse problems we intend the characterization of the domains where mean value properties might hold, see the very insightful survey \cite{NV}. Lanconelli proved in \cite{L13} a Kuran-type theorem by characterizing the gauge balls in $\Hn$ as the only domains where the pointwise value of $\LH$-harmonic functions coincides with the weighted solid-average displayed in \eqref{mean} and given by the weight $|D_H\rho(\xi)|^2$ (the proof in \cite{L13} relies on the identification of the fundamental solution from equilibrium potentials constructed via group convolutions, see also \cite{ASZ} for the case of classical Newtonian potentials). On the other hand, it is known (see, e.g., \cite{Sh}) that the classical Serrin overdetermined problem
$$
\begin{cases}
\Delta u =N \quad\,\,\,\text{ in } D,  \\
u=0 \qquad\,\,\,\,\,\text{ on }\de D,  \\
|D u|= c\quad\,\,\, \text{ on }\de D  \\
\end{cases}
$$
is related with the characterizations of domains $D$ in $\R^N$ for which the solid average and the surface average of harmonic functions do coincide. We would like to convince the reader that the same applies for the weighted averages in \eqref{mean} and our overdetermined system \eqref{modeloverdet} (namely the case $\alpha=2$ of \eqref{eq: overdet}) which we can rewrite in our notations as
\begin{equation}\label{quantebello}
\begin{cases}
\LH u =Q\, |D_H\rho|^2 \quad\,\,\,\,\text{ in } \Omega,  \\
u=0 \qquad\,\,\,\,\,\,\,\,\,\,\,\quad\quad\,\,\,\,\text{ on }\de \Omega,  \\
|D_H u|= c\,|D_H\rho| \quad\,\,\,\, \text{ on }\de \Omega.  \\
\end{cases}
\end{equation}
The fact that the gauge-function $\rho$ does not satisfy the relevant eikonal equation (i.e. $|D_H\rho(\xi)|=\frac{|x|}{\rho(\xi)}\not\equiv 1$) appears then as the subtle difference between the two systems.

\vskip 0.4cm

\noindent In order to do so, we need a few more notations. The horizontal divergence of a vector field $(V_1, \ldots, V_{2n})$ is given by
\begin{equation*}
\divH (V)=\sum_{j=1}^{2n} X_j ( V_j ),
\end{equation*}
so that one has $\LH u = \divH (D_H u)$. For a given bounded open set $\O\subseteq\Hn$ with smooth boundary, we say that $f$ is a defining function for $\O$ if $f$ is smooth in a neighborhood of $\de\Omega$ and
\begin{equation*}
\O=\{\xi \in \Hn\; : \; f(\xi)<0\}, \qquad \de\O=\{\xi \in \Hn\; : \; f(\xi)=0\}, \qquad Df\neq 0 \; \text{on} \; \de\O .
\end{equation*}
We denote by $\nu$ the Euclidean outer normal to $\de\O$, and we define the horizontal outer normal to $\de\O$ as follows
\begin{equation*}
\nu^H(\xi):=\frac{D_H f(\xi)}{|D_H f(\xi)|}, \quad \text{for any $\xi\in \de\O$ such that $D_H f(\xi)\neq 0$.}
\end{equation*}
A point $\xi\in \de\O$, such that $D_H f(\xi)= 0$, is called characteristic. In the literature there is a well-established notion of horizontal perimeter measure of $\de\Omega$ (see, e.g., \cite{CDPT}) which can be defined as $ d\sigma_H=\frac{|D_H f|}{|D f|} d\sigma$. In particular, for smooth enough functions $v$, one has 
\begin{equation*}
\int_\O X_j(v)(\xi) \; d\xi= \int_{\de\O} v(\xi) \; \frac{X_j f(\xi)}{|Df(\xi)|} \; d\sigma(\xi) =\int_{\de\O} v(\xi) \; \nu^H_j(\xi) \; d\sigma_H(\xi),\quad j=1,\dots,2n.
\end{equation*}
Hence, since $f(\xi)=\rho(\xi)-R$ works as a defining function for $\de B_R(0)$, we can rewrite the surface average in \eqref{mean} as
$$\frac{(Q-2)\beta}{R^{Q-1}}\int_{\de B_R(0)} h(\xi) \frac{|D_H\rho(\xi)|^2}{|D \rho(\xi)|} d\sigma(\xi)=\frac{(Q-2)\beta}{R^{Q-1}}\int_{\de B_R(0)} h(\xi) |D_H\rho(\xi)| d\sigma_H(\xi).$$
Denoting (for $\Omega$ such that $0\not\in \de \Omega$)
$$
A_{solid}(\Omega)=\int_\Omega |D_H\rho(\xi)|^2 d\xi\quad\mbox{ and }\quad A_{surface}(\de\Omega)=\int_{\de\Omega} |D_H\rho(\xi)| d\sigma_H(\xi)
$$
and using the fact that $1$ is $\LH$-harmonic, it is clear from \eqref{mean} that
$$
\frac{1}{A_{solid}(B_R(0))} \int_{B_R(0)} h(\xi) |D_H\rho(\xi)|^2 d\xi = \frac{1}{A_{surface}(\de B_R(0))} \int_{\de B_R(0)} h(\xi) |D_H\rho(\xi)| d\sigma_H(\xi)
$$
for any $\LH$-harmonic function $h$ in $B_R(0)$ (continuous up to the boundary). Let us now consider a solution $u$ to our overdetermined system \eqref{quantebello} in $\Omega$ (assume for now that everything is smooth enough in order to be well-defined, and $0\in\Omega$) and consider any $\LH$-harmonic function $h$ in $\Omega$ (smooth up to the boundary), we can deduce that
\begin{align}\label{perognih}
&\int_{\Omega} h(\xi) |D_H\rho(\xi)|^2 d\xi = \frac{1}{Q} \int_{\Omega} h(\xi) \LH u(\xi) d\xi = \frac{1}{Q} \int_{\Omega} \left( h(\xi) \LH u(\xi) - u(\xi) \LH h(\xi)\right) d\xi \notag\\
&=\frac{1}{Q} \int_{\Omega} \divH \left( h D_H u - u D_H h\right)(\xi) d\xi = \frac{1}{Q} \int_{\de\Omega} h(\xi) |D_H u(\xi)| d\sigma_H(\xi)\notag\\
&=\frac{c}{Q} \int_{\de\Omega} h(\xi) |D_H \rho(\xi)| d\sigma_H(\xi)
\end{align}
where we exploited the fact that $u$ is also a defining function for $\Omega$ (see \eqref{defining} below for more details). In particular, 
by plugging $h\equiv 1$ in \eqref{perognih} it yields
$$
\frac{c}{Q}=\frac{A_{solid}(\Omega)}{A_{surface}(\de\Omega)}.
$$
Substituting in \eqref{perognih} the previous relationship, we finally obtain
$$
\frac{1}{A_{solid}(\Omega)} \int_{\Omega} h(\xi) |D_H\rho(\xi)|^2 d\xi = \frac{1}{A_{surface}(\de \Omega)} \int_{\de \Omega} h(\xi) |D_H\rho(\xi)| d\sigma_H(\xi)
$$
for any smooth $\Omega$ admitting a solution to our overdetermined system.

\section{Preliminaries: assumptions and integral identities}\label{sec2}

Let us now define, for $\alpha >0$, our meaning for competitor sets $\Omega$ and for solutions to the overdetermined system \eqref{eq: overdet}. We consider an open, bounded and connected set $\Omega\subset \Hn$ with smooth boundary, we assume that around characteristic points of the boundary (defined in the previous section) the set $\O$ has interior and exterior tangent gauge-balls, and we ask that the unique weak solutions to
\begin{equation}\label{weak}\begin{cases}
\LH u =(Q+\alpha-2)\Fa \quad\,\,\,\text{ in } \O,  \\
u=0 \qquad\qquad\qquad\qquad\,\,\,\,\,\text{ on }\de \O
\end{cases}\end{equation}
are also smooth in a neighborhood of the boundary of $\Omega$. For such $\Omega$, we say that $u$ solves \eqref{eq: overdet}, if $u$ is the unique weak solution to \eqref{weak} and satisfies
\begin{equation}\label{extra}
|D_H u(\xi)|= c\, \Fa^{\frac{1}{2}}(\xi)\quad \mbox{ for }\xi\in\de\Omega
\end{equation}
for some positive constant $c$. We recall that a weak solution $u$ to \eqref{weak} is a function in the Folland-Stein space $H^1_0(\Omega,X)$ (of functions in $L^2$ with horizontal gradient in $L^2$) such that
$$
\int_\Omega \left\langle D_H u, D_H\psi\right\rangle + (Q+\a-2)\int_\Omega \Fa \psi = 0 \quad \mbox{ for all }\psi\in C_0^\infty(\Omega).
$$
To summarize the assumptions and notations discussed here, we shall say that
\begin{equation}\label{competitor}
\Omega \mbox{ is a competitor set}
\end{equation}
and
\begin{equation}\label{solution}
u \mbox{ is solution to \eqref{eq: overdet} in }\Omega.
\end{equation}
It is safe to remark that the gauge balls are competitor sets and the function $u_{\alpha}$ in \eqref{ua} is solution to \eqref{eq: overdet} in a gauge ball for any $\alpha>0$.

\vskip 0.4cm

Some comments are in order. Being the function $\Fa$ non-smooth at $\xi=0$ (whenever $\alpha< 4$), we assume $0 \in \Omega$ (whenever $\alpha< 4$) in order to fix the ideas. We can then understand the assumptions by splitting the discussion into interior and boundary issues. Since
$$
\Fa\in C^{\infty}(\Omega\smallsetminus \{0\})\quad \mbox{and}\quad
\Fa\in \begin{cases}
L^\infty(\Omega) \quad\,\,\,\text{ if } \alpha\geq 2,  \\
L^p(\Omega) \quad\,\,\,\text{ for some }p>\frac{Q}{2}\mbox{ if }0<\alpha<2  \\
\end{cases}
$$
which is a consequence of the boundedness of the term $|D_H\rho|$ in \eqref{deFa}, it is known (see, e.g., \cite{BLU, U}) that weak solutions to \eqref{weak} satisfy $u\in C^{\infty}(\Omega\smallsetminus \{0\})\cap C(\Omega)$ (actually more than just continuous). Moreover, as the boundary data are smooth, from the results \cite{KN, Juno} we know that $u$ is smooth up to every boundary point which is not characteristic. At characteristic boundary points the solutions exhibit a more delicate behaviour (in the sense that they might be non-regular for smooth $\de\Omega$) and sharp geometric conditions which ensure regularity properties are not fully understood yet: we refer to \cite{Jdue} for sufficient conditions for regularity of first and second derivatives at characteristic boundary points. On the other hand, the situation concerning the continuity and the oscillation of the solution at such points is more clear. The exterior gauge-ball condition ensures always the H\"older-continuity at the boundary (see e.g. \cite{LU97}, and also \cite{U} for a Moser-iteration up to the boundary in more general settings). Since $(Q+\a-2)\Fa \geq 0$ and $u\in C(\overline{\Omega})$, we can exploit the weak maximum principle to infer that
\begin{equation}\label{sign}
u<0 \quad \mbox{in }\Omega\qquad\mbox{ and }\qquad u\equiv 0 \quad\mbox{ in a pointwise sense on }\de\Omega.
\end{equation}
The assumption concerning the interior gauge-ball condition at the characteristic point yields also the validity of the so-called Hopf-lemma at every boundary point, i.e. a boundary comparison principle (see \cite{BiCu, MTh}). In particular, since $u$ is assumed to be regular and by \eqref{extra} the characteristic points might arise only on $\{|x|=0\}$ where the normal has to be parallel to the the $t$-axis, we have
$$
|Du|>0\quad\mbox{ on }\de\Omega.
$$
The previous property combined with \eqref{sign} is telling us that under our assumptions we have that
\begin{equation}\label{defining}
u\mbox{ is a defining function for }\Omega.
\end{equation}

\vskip 0.4cm

Let us now write some pointwise relationships concerning the function $\Fa$, which will be useful in the sequel: for any $\xi\not= 0$ we have
\begin{align}\label{derfa}
&D_H \Fa (\xi)= 2x \rho^{\a-4}(\xi) + (\a-4)|x|^2  \rho^{\a-8}(\xi) \left( x |x|^2  + (Jx)t \right),\\
&|D_H \Fa (\xi)|^2= 4 |x|^2 \rho^{2\a-8}(\xi) + \a(\a-4)|x|^6  \rho^{2\a-12}(\xi),\notag\\
&T \Fa (\xi)= \frac{1}{2} (\a-4)|x|^2  \rho^{\a-8}(\xi) t ,	\notag\\
& \LH \Fa (\xi)= 2(Q-2) \rho^{\a-4}(\xi) + (\a-4)(Q+\a-2) |x|^4 \rho^{\a-8}(\xi).\notag
\end{align}
The following crucial integral identity concerns the interplay between the function $\Fa$ and the solutions $u$ of the overdetermined systems under discussion: we stress here the double role of $\Fa$ as a source function and as a Neumann datum. 

\begin{lemma} \label{pohazev}
Fix $c, \alpha>0$. Let $u$ be a solution of \eqref{eq: overdet} in $\Omega$ according to \eqref{competitor}-\eqref{solution}. Then we have
\begin{equation}\label{eq: pohazev}
(Q+2\alpha-2)\int_\O u\; \Fa \;  d\xi=-c^2\int_{\O} \Fa  \;d\xi.
\end{equation}
\end{lemma}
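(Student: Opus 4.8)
The plan is to run a Rellich--Pohozaev argument using the generator of the anisotropic dilations as multiplier. Introduce the vector field
$$
Z=\sum_{j=1}^{2n}x_j\,\de_{x_j}+2t\,\de_t,
$$
the infinitesimal generator of $\{\delta_\lambda\}$. Three structural facts drive the computation: its Euclidean divergence is $\mathrm{div}(Z)=Q$; since $\Fa$ is $\delta_\lambda$-homogeneous of degree $\a-2$ one has $Z\Fa=(\a-2)\Fa$; and a direct check gives $[Z,X_j]=-X_j$ for every $j$, equivalently $X_j(Zu)=Z(X_j u)+X_j u$, so that $\la D_H(Zu),D_H u\ra=\tfrac12 Z(|D_H u|^2)+|D_H u|^2$.

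The core is to compute $I:=\int_\O (Zu)\,\LH u\,d\xi$ in two ways. Substituting $\LH u=(Q+\a-2)\Fa$ and integrating the vector field $Z$ by parts (the boundary term drops because $u\equiv 0$ on $\de\O$), using $\mathrm{div}(Z)=Q$ and $Z\Fa=(\a-2)\Fa$, yields $I=-(Q+\a-2)^2\int_\O u\,\Fa\,d\xi$. On the other hand, a horizontal integration by parts ($\LH u=\divH(D_H u)$) together with the commutator identity gives
$$
I=\int_{\de\O}(Zu)\,\la D_H u,\nu^H\ra\,d\sigma_H-\tfrac12\int_{\de\O}|D_H u|^2\,\la Z,\nu\ra\,d\sigma+\Big(\tfrac{Q}{2}-1\Big)\int_\O|D_H u|^2\,d\xi ,
$$
where I also use $\int_\O|D_H u|^2\, d\xi=-(Q+\a-2)\int_\O u\,\Fa\,d\xi$.

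The boundary terms then collapse. Since by \eqref{defining}--\eqref{sign} $u$ is a defining function with $u<0$ in $\O$, on $\de\O$ the Euclidean gradient is normal, $Du=|Du|\nu$, hence $Zu=\la Z,Du\ra=|Du|\,\la Z,\nu\ra$; moreover $\nu^H=D_H u/|D_H u|$ so $\la D_H u,\nu^H\ra=|D_H u|$, and $d\sigma_H=\tfrac{|D_H u|}{|Du|}d\sigma$. The first boundary integral thus equals $\int_{\de\O}|D_H u|^2\,\la Z,\nu\ra\,d\sigma$, so together with the second it reduces to $\tfrac12\int_{\de\O}|D_H u|^2\,\la Z,\nu\ra\,d\sigma$. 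Inserting the Neumann datum $|D_H u|^2=c^2\Fa$ from \eqref{extra} and applying the Euclidean divergence theorem with $\mathrm{div}(\Fa Z)=(Q+\a-2)\Fa$ gives $\tfrac{c^2}{2}(Q+\a-2)\int_\O\Fa\,d\xi$. Equating the two expressions for $I$ and cancelling the common positive factor $(Q+\a-2)$ (recall $Q=2n+2\geq4$, $\a>0$) produces $(Q+2\a-2)\int_\O u\,\Fa=-c^2\int_\O\Fa$, which is \eqref{eq: pohazev}.

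The main obstacle is rigor rather than algebra: all the integrations by parts must be justified despite the singularity of $\Fa$ (and of the derivatives of $u$) at the origin when $\a<4$, and despite the characteristic points on $\de\O$, where $\nu^H$ degenerates and $u$ need not be $C^2$. I would establish the identities on the regularized domain $\O\smallsetminus\overline{B_\e(0)}$, with an additional thin tube excised around the characteristic set $\de\O\cap\{x=0\}$, and then let the parameters tend to $0$. The extra contributions on $\de B_\e(0)$ are controlled by the homogeneity bounds $\Fa\lesssim\rho^{\a-2}$, $Zu$ bounded, and $|D_H u|^2\lesssim|x|^2\rho^{2\a-4}$, all of which are integrable for $\a$ in the relevant range; near the characteristic points the interior/exterior tangent gauge-ball conditions and the boundary regularity recalled in Section \ref{sec2} ensure the excised contributions vanish in the limit, the cancellations above being valid on the non-characteristic part where $\nu^H=D_H u/|D_H u|$ and $d\sigma_H=\tfrac{|D_H u|}{|Du|}d\sigma$ hold.
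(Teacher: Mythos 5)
Your proof is correct and is essentially the paper's own argument: a Pohozaev-type identity driven by the dilation generator $Z$, the homogeneity $Z\Fa=(\a-2)\Fa$, the divergence identity $\mathrm{div}(\Fa Z)=(Q+\a-2)\Fa$, and the defining-function property \eqref{defining} to turn the surviving boundary flux into $c^2\int_{\de\O}\Fa\la Z,\nu\ra\,d\sigma$. The only difference is bookkeeping: the paper applies Green's identity to the pair $(u,Zu)$ via the operator commutation $\LH(Zu)=2\LH u+Z(\LH u)$, so the Dirichlet energy never enters, whereas you expand $\int_\O (Zu)\,\LH u\,d\xi$ \`a la Rellich through $[Z,X_j]=-X_j$ and then eliminate $\int_\O|D_H u|^2\,d\xi$ by one further integration by parts --- two algebraically equivalent routes to \eqref{eq: pohazev}.
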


\begin{proof}
The proof of \eqref{eq: pohazev} is based on Pohozaev-type identities, which were first established in this context in \cite{GL92}. To this aim, we in fact consider as in \cite{GL92} the vector field generating the group dilations
$$Z=\sum_{j=1}^{2n} x_j \frac{\partial}{\partial x_j} + 2t\frac{\partial}{\partial t}=\sum_{j=1}^{2n} x_j X_j + 2t T.$$
A direct computation shows that
\begin{equation*}
\LH(Z(u))=2\LH u +Z(\LH u)=\alpha(Q+\a-2) \Fa \quad\mbox{ almost everywhere in }\Omega,
\end{equation*}
where we exploited the fact that $Z(\Fa)=(\alpha-2)\Fa$ (recall that $\Fa$ is $\delta_\lambda$-homogeneous of degree $\alpha-2$). By means of integration by parts, we then obtain
\begin{align}\label{primadelletre}
&\int_\O \big\{ \alpha(Q+\a-2) u \Fa  - (Q+\a-2) Z(u) \Fa \big\} \; d\xi \\
&=\int_\O \big\{ \LH(Z(u)) u -  Z(u)\LH(u) \big\} \; d\xi =\notag\\
& = \int_{\de\O} \big\{ u\; \la Z(u),\nu^H \ra -  Z(u) \; \la D_H u, \nu^H \ra \big\} \; d\sigma_H=
- \int_{\de\O}  Z(u) \; |D_H u|  \; d\sigma_H\notag
\end{align}
where we used \eqref{defining}. Now, since
$$Z(u) \; |D_H u|=\left(\la x,D_H u \ra + 2t\; Tu \right)|D_H u|=\left(\la x, \nu^H \ra + 2t\; \frac{Tu}{|D_H u|} \right)|D_H u|^2 , $$
and exploiting \eqref{extra}, we get
\begin{equation}\label{secondadelletre}
\int_{\de\O}  Z(u) \; |D_H u|  \; d\sigma_H =
c^2\int_\O \big\{Q \Fa +  Z(\Fa) \big\} \; d\xi =
c^2(Q+\a-2)\int_\O \Fa \; d\xi .
\end{equation}
On the other hand, we also have
\begin{align}\label{terzadelletre}
 (\a-2)\int_{\O}  u  \Fa  \; d\xi = & \int_{\O}  u  Z(\Fa)  \; d\xi=\\
 \int_\O \big\{Z(u\Fa) -  Z(u)\Fa \big\} \; d\xi = & -Q\int_{\O}  u  \Fa  \; d\xi - \int_{\O}  Z(u)  \Fa  \; d\xi .\notag
\end{align}
By putting together the three integral identities \eqref{primadelletre}-\eqref{secondadelletre}-\eqref{terzadelletre} and using $Q+\a-2>0$, we get the desired formula \eqref{eq: pohazev}.
\end{proof}

\section{Weighted P-functions for domains with toric symmetries}\label{sec3}

For any solution $u$ to \eqref{eq: overdet} in a competitor set $\Omega$, we introduce the function
\begin{equation}\label{Pfunction}
v=\frac{1}{\Fa}|D_H u|^2 -\a u .
\end{equation}
It is clear that $v$ is defined in a pointwise sense in $\Omega\smallsetminus \{|x|=0\}$. Moreover, thanks to the Dirichlet and Neumann conditions for $u$, we know that $v$ approaches the constant $c^2$ at $\de\Omega$ (at least at the points of the boundary which are not on the $t$-axis).\\
On the other hand, since
$$
\int_\O |D_H u|^2 = - \int_\O u \LH u = - (Q+\a -2)\int_\O u \Fa,
$$
we obtain from \eqref{eq: pohazev} that
\begin{equation}\label{medianulla}
\int_\O v\; \Fa \;  d\xi=c^2\int_{\O} \Fa \;d\xi.
\end{equation}
This is saying that, with respect to the weight $\Fa$, the function $v$ is in average with its boundary datum $c^2$. In order to claim that $v$ is the so-called P-function we are looking for, we need to show some form of sub-harmonicity properties for $v$. We recall that, in the context of the Heisenberg group, the only Obata-type arguments available are in the whole space $\Hn$ and in this respect the first appearance of such auxiliary functions is in the celebrated paper by Jerison and Lee \cite[Theorem 7.8]{JL87} dealing with cylindrical solutions of the CR-Yamabe equation. Recently, auxiliary functions of rather different nature but displaying similar singularities as $v$ on the $t$-axis appeared in our work \cite{GMT}. The function $v$ deals instead with solutions to the overdetermined systems \eqref{eq: overdet} in bounded domains, and in the next lemma we want to study its behaviour for solutions with toric symmetries.

\begin{lemma}\label{lemmalemma}
Fix $\a\in\R$. Let $u$ be a smooth solution to $\LH u =(Q+\alpha-2)\Fa$ in the open set $\Omega\smallsetminus \{|x|=0\}$ with $\Omega\subset \Hn$ being toric symmetric. Assume that
$$
u(x,t)=U(x^2_1+x^2_{n+1},\ldots,x^2_n+x^2_{2n},t)
$$
for some smooth function $U$, and denote $v=\frac{1}{\Fa}|D_H u|^2 -\a u$. For $n\geq 2$, in $\Omega\smallsetminus \{|x|=0\}$ we have
{\allowdisplaybreaks
\begin{align}\label{magik}
\frac{\Fa}{16} \LH v &= 2\left(\|M\|^2-\frac{1}{n+1}({{\rm trace}}(M))^2  \right)+\frac{2n+\a}{n^2(n+1)}\left(\sum_{j=1}^n U_j - \frac{n}{2}\Fa\right)^2 \\
&+ \frac{(4-\a)(2n+\a)(n-1)}{4n+4}\frac{|x|^4}{\rho^8}\left(\sum_{j=1}^n (x^2_j+x^2_{n+j}) U_j + t U_t - \frac{1}{2}\rho^\a\right)^2\notag\\
&+ \frac{(2n+\a)(n-2)}{2n(n+1)} \sum_{j=1}^n \left(1-n\frac{x^2_j+x^2_{n+j}}{|x|^2}\right)\left(U_j-\frac{1}{2}\Fa\right)^2   \notag\\
&+ \frac{4-\a}{4} \sum_{j=1}^n \left(\frac{(n-1)(2n+4)}{n+1} \frac{|x|^2(x^2_j+x^2_{n+j})}{\rho^8} + \frac{4-2n}{n+1}\frac{x^2_j+x^2_{n+j}}{|x|^2\rho^4}\right)\left(|x|^2U_t - t U_j\right)^2   \notag\\
&+\frac{4-\a}{8(n^2-1)}\frac{1}{\rho^4}\sum_{i\neq j}\left(t U_i+tU_j-2|x|^2U_t\right)^2\notag\\
&+\sum_{i\neq j}\left(U_i-U_j\right)^2\left[\frac{(4-\a)(n-2)^2}{8n^2(n^2-1)} + \frac{8+4n-n^2}{4n^2(n+1)}+\frac{3n-6}{4n+4}\frac{(x^2_i+x^2_{n+i})(x^2_j+x^2_{n+j})}{|x|^4}\right.\notag\\
&\hspace{2cm}- \frac{3}{4n+4}\frac{x^2_i+x^2_{n+i}+x^2_j+x^2_{n+j}}{|x|^2} + \frac{4-\a}{4n+4}\frac{|x|^2(x^2_i+x^2_{n+i}+x^2_j+x^2_{n+j})}{\rho^4}\notag\\
&\hspace{3cm}+\frac{(4-\a)(4-2n)}{4n+4}\frac{(x^2_i+x^2_{n+i})(x^2_j+x^2_{n+j})}{\rho^4} - \frac{4-\a}{8(n^2-1)}\frac{|x|^4}{\rho^4}\notag\\
&\hspace{3cm}+\left. \frac{(4-\a)(n-1)(n+2)}{4n+4}\frac{|x|^4(x^2_i+x^2_{n+i})(x^2_j+x^2_{n+j})}{\rho^8}\right].\notag
\end{align}
}
Concerning the case $n= 1$, in $\Omega\smallsetminus \{|x|=0\}$ we have
\begin{align}\label{magikuno}
\frac{\Fa}{16} \LH v &= 2\left(\|M\|^2-\frac{1}{2}({{\rm trace}}(M))^2  \right)+\frac{2+\a}{2}\left(U_1 - \frac{1}{2}\Fa \right)^2 \\
&+ \frac{4-\a}{2}\frac{1}{\rho^4}\left(|x|^2 U_t - t U_1\right)^2.   \notag
\end{align}
The $(n+1)\times(n+1)$ symmetric matrix $M$ appearing in both \eqref{magik} and \eqref{magikuno} is defined in \eqref{defmatrix} below, and it involves first and second derivatives of $U$.
\end{lemma}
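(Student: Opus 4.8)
The plan is to carry out the whole computation in the cylindrical coordinates adapted to the torus action. I set $\sigma_k=x_k^2+x_{n+k}^2$ for $k=1,\dots,n$, so that $|x|^2=\sum_k\sigma_k$, $\rho^4=(\sum_k\sigma_k)^2+t^2$, and $u=U(\sigma_1,\dots,\sigma_n,t)$. Using $J$ as in \eqref{defJ} together with $X_j=\partial_{x_j}+2(Jx)_j\partial_t$, the first task is to record how the horizontal fields act on a function of $(\sigma,t)$: a direct computation gives $X_k u=2x_kU_k-2x_{n+k}U_t$ and $X_{n+k}u=2x_{n+k}U_k+2x_kU_t$ (with $U_k=\partial_{\sigma_k}U$), whence $|D_H u|^2=4\sum_k\sigma_k(U_k^2+U_t^2)$ and $X_k^2u+X_{n+k}^2u=4U_k+4\sigma_k(U_{kk}+U_{tt})$. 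Summing over $k$, the source equation reduces to the single scalar identity $\sum_kU_k+\sum_k\sigma_kU_{kk}+|x|^2U_{tt}=\tfrac{2n+\alpha}{4}\Fa$, and this is the only way in which the PDE $\LH u=(Q+\alpha-2)\Fa$ enters.

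Next I would differentiate $v=\Fa^{-1}|D_H u|^2-\alpha u$. Writing $\LH v=\LH\big(\Fa^{-1}|D_H u|^2\big)-\alpha\LH u$ and expanding by the Leibniz rule $\LH(fg)=f\LH g+g\LH f+2\langle D_H f,D_H g\rangle$ with $f=\Fa^{-1}$, I would insert the explicit derivatives of $\Fa$ from \eqref{derfa} (rewritten in the $(\sigma,t)$ variables). For the factor $\LH|D_H u|^2$ the organizing principle is the sub-Riemannian Bochner identity $\tfrac12\LH|D_H u|^2=\sum_{i,j}(X_iX_ju)^2+\langle D_H u,D_H\LH u\rangle+(\text{commutator terms})$, where the commutator terms arise from $[X_i,X_j]=4J_{ij}T$ and are precisely the source of all the $Tu=U_t$ contributions appearing in \eqref{magik}. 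Under the toric symmetry each of these pieces collapses to an explicit quadratic expression in $\{U_{kl},U_{kt},U_{tt}\}$ and $\{U_k,U_t\}$ with coefficients rational in $\sigma_k,|x|^2,t,\rho$; substituting $\LH u=(Q+\alpha-2)\Fa$ and its horizontal gradient lets me eliminate one combination of the second-order terms throughout.

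After multiplying by $\tfrac{\Fa}{16}$, what remains is a single quadratic form in the reduced first and second derivatives of $U$, and the core of the proof is to reorganize it into the stated sum of squares. I would \emph{define} the symmetric $(n+1)\times(n+1)$ matrix $M$ of \eqref{defmatrix} to be the renormalized Hessian of $U$ whose trace is, by the reduced equation, a prescribed multiple of $\Fa$, so that the pure second-order part becomes the Newton-type combination $2\big(\|M\|^2-\tfrac1{n+1}(\mathrm{trace}(M))^2\big)=2\|M-\tfrac{\mathrm{trace}(M)}{n+1}I\|^2$, which is manifestly nonnegative; the normalization is moreover chosen so that $M=\lambda I$ for the gauge-radial candidate $u_\alpha$ of \eqref{ua}. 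The leftover cross-terms between second and first derivatives, together with the pure first-order terms, are then absorbed by completing squares, producing the displayed brackets $(\sum_kU_k-\tfrac n2\Fa)^2$, $(\sum_k\sigma_kU_k+tU_t-\tfrac12\rho^\alpha)^2$, $(|x|^2U_t-tU_k)^2$, $(tU_i+tU_j-2|x|^2U_t)^2$ and $(U_i-U_j)^2$. As a built-in consistency test one checks directly that each such bracket vanishes identically on $u_\alpha$ (indeed $U_k=\tfrac12\Fa$, $\sum_k\sigma_kU_k+tU_t=\tfrac12\rho^\alpha$ and $|x|^2U_t=tU_k$ for the candidate, for which $v\equiv R^\alpha=c^2$), which is the concrete manifestation of the statement that $\LH v$ measures the deviation of $u$ from $u_\alpha$.

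The main obstacle is the coefficient bookkeeping in this final regrouping: after the PDE substitution one must verify that every monomial matches — including the rather intricate position-dependent weights multiplying $(U_i-U_j)^2$ in the long bracket of \eqref{magik} — and that no spurious cross-term survives. This is where both the precise $\alpha$-dependence and the exact form of $M$ get pinned down, since the Newton term must account for the entire trace contribution and nothing more. Finally I would treat the dimensional degenerations explicitly: the factors $(n-1)$ and $(n-2)$ annihilate several families of squares, so the cases $n\ge 3$, $n=2$ and $n=1$ are recorded separately; for $n=1$ there are no distinct $\sigma$-directions, $M$ is $2\times 2$, and the identity collapses to the short form \eqref{magikuno}, which serves as the base case of the same computation.
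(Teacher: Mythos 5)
Your reduction is exactly the paper's opening move: the change of variables $s_j=x_j^2+x_{n+j}^2$, the formulas $|D_Hu|^2=4\sigma U_t^2+4\sum_j s_jU_j^2$ and $\tfrac14\LH u=\sigma U_{tt}+\sum_j(s_jU_{jj}+U_j)=:\mathcal{L}U$, and the observation that the PDE enters only through the reduced identity $\mathcal{L}U=\tfrac{2n+\a}{4}\Fa$ all coincide with the paper's proof (the paper then computes $\mathcal{L}$ applied to $g=\Fa^{-1}|D_Hu|^2$ directly in the $(s,t)$ variables, see \eqref{fromhere} and \eqref{daqui}, rather than passing through the full-group Bochner identity; that difference is purely organizational). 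The gap is everything after that. The lemma \emph{is} the identity: its entire content is the exact list of position-dependent coefficients in \eqref{magik}, and you never produce them. ``Completing squares'' and ``verifying that every monomial matches'' is not a reduction of the problem — it is the problem — and the only verification you propose, namely that each bracket vanishes on the candidate $u_\a$ of \eqref{ua}, cannot pin the identity down: infinitely many wrong combinations of such brackets also vanish there. A smaller but real slip: the $U_t$ contributions are not ``precisely'' generated by the commutators $[X_i,X_j]=4J_{ij}T$; already $|D_Hu|^2=4\sigma U_t^2+4\sum_js_jU_j^2$ and the horizontal Hessian entries $X_k^2u=2U_k+4x_k^2U_{kk}-8x_kx_{n+k}U_{kt}+4x_{n+k}^2U_{tt}$ carry $U_t$ and $U_{kt}$ with no commutator involved.

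Moreover, the one structural object that would make the bookkeeping close, the matrix $M$, is mis-specified, and your two requirements on it are mutually inconsistent. You ask that ${\rm trace}(M)$ be, via the reduced equation, a prescribed multiple of $\Fa$; the natural such matrix is $D_2+{\rm diag}(U_1,\dots,U_n,0)$, where $(D_2)_{ij}=\sqrt{s_is_j}\,U_{ij}$. But on the candidate $u_\a$ one has $U_{ij}=\tfrac12(\sigma^2+t^2)^{\frac{\a-4}{4}}+\tfrac{\a-4}{4}\sigma^2(\sigma^2+t^2)^{\frac{\a-8}{4}}$ for \emph{all} $i,j$ (since $U$ depends on $s$ only through $\sigma$), so the off-diagonal entries $\sqrt{s_is_j}\,U_{ij}$ do not vanish and this matrix is not scalar at $u_\a$ — contradicting your other requirement, which is the one the symmetry argument actually needs. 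The paper's matrix \eqref{defmatrix} is $M=D_2-E_1-\tfrac{2\sigma}{\sigma^2+t^2}\tfrac{\a-4}{4}E_2$, with first-order corrections in \emph{every} entry (off-diagonal included); these are exactly what cancels $\sqrt{s_is_j}\,U_{ij}$ at the candidate, and as a consequence its trace, computed in \eqref{traceformula}, equals $\tfrac{2n+\a}{4}\Fa$ \emph{plus} terms linear in $U_j$ and $U_t$ — not a multiple of $\Fa$. With your trace-normalized $M$ the Newton deficit $\|M\|^2-\tfrac1{n+1}({\rm trace}(M))^2$ generates different cross terms between second and first derivatives, and the remainder would not assemble into the squares displayed in \eqref{magik}. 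In short: the strategy (reduce, pick a first-order-corrected Hessian, complete squares) is the right one and is the paper's, but the identity is asserted rather than derived, and the matrix it is asserted for is not the right one.
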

\begin{proof}
Let us fix some notations: for $n\geq 1$ and $x\in\R^{2n}$ we denote
$$
s=(s_1,\ldots,s_n) \quad \mbox{ with }\quad s_j=x^2_j+x^2_{n+j}\mbox{ for }j\in\{1,\ldots,n\}
$$
and
$$
\sigma=\sum_{j=1}^{n}s_j=|x|^2.
$$
With this choice for the change of variables and recalling the assumption $u(x,t)=U(s,t)$, we have
$$
\frac{1}{4}|D_H u|^2=\sigma U^2_t + \sum_{j=1}^n s_j U^2_j
$$
and
$$
\frac{1}{4}\LH u=\sigma U_{tt} + \sum_{j=1}^n \left(s_j U_{jj}+ U_{j}\right)=:\mathcal{L} U
$$
where $U_j$ stands for $U_{s_j}$ and similar notations hold for the second derivatives. If we keep denoting by $\Fa$ and $v$ these functions after the change of variables, it yields
$$
\Fa=\sigma (\sigma^2+t^2)^{\frac{\a-4}{4}}
$$
and
$$
v=4\frac{\sigma U^2_t + \sum_{j=1}^n s_j U^2_j}{\sigma (\sigma^2+t^2)^{\frac{\a-4}{4}}}-\a U
$$
for any $\sigma>0$. With the notation
$$
g(s,t)=\frac{\sigma U^2_t(s,t) + \sum_{j=1}^n s_j U^2_j(s,t)}{\sigma (\sigma^2+t^2)^{\frac{\a-4}{4}}},
$$
we get
\begin{equation}\label{fromhere}
\frac{\Fa}{16} \LH v=\sigma (\sigma^2+t^2)^{\frac{\a-4}{4}}\mathcal{L}g-\frac{\a}{4}\sigma (\sigma^2+t^2)^{\frac{\a-4}{4}}\mathcal{L} U.
\end{equation}
For $\sigma >0$ let us now introduce the matrix
\begin{equation}\label{defmatrix}
M= D_2 - D_1
\end{equation}
where
$$
D_1= E_1 + \frac{2\sigma}{\sigma^2+t^2} \frac{\a-4}{4} E_2
$$
and the $(n+1)\times(n+1)$ symmetric matrices $D_2$, $E_1$, and $E_2$ are defined by
$$
\left(D_2\right)_{ij}=\begin{cases}
\sqrt{s_i s_j}\, U_{ij} \quad\,\,\,\text{ if } i,j\in \{1,\ldots, n\},  \\
\sqrt{s_i \sigma}\, U_{it} \quad\,\,\,\,\,\text{ if } i\in \{1,\ldots, n\}\mbox{ and }j=n+1,  \\
\sigma U_{tt}\quad\quad\quad\,\,\,\,\text{ if } i=j=n+1,
\end{cases}
$$
$$
\left(E_1\right)_{ij}=\begin{cases}
\frac{\sqrt{s_i s_j}}{\sigma}\frac{U_i+U_j}{2}-\frac{1}{2}U_i\delta_{ij} \quad\,\,\,\text{ if } i,j\in \{1,\ldots, n\},  \\
0 \qquad\qquad\qquad\quad\quad\quad\,\,\,\,\,\text{ if } i\in \{1,\ldots, n\}\mbox{ and }j=n+1,  \\
\frac{\sum_{j=1}^n{s_jU_j}}{2\sigma}\quad\quad\quad\quad\quad\quad\,\,\text{ if } i=j=n+1,
\end{cases}
$$
$$
\left(E_2\right)_{ij}=\begin{cases}
\sqrt{s_i s_j}\, \frac{U_i+U_j}{2} \quad\,\,\,\text{ if } i,j\in \{1,\ldots, n\},  \\
\sqrt{\frac{s_i}{\sigma}}\, \frac{\sigma U_t+ t U_i}{2} \quad\,\,\,\text{ if } i\in \{1,\ldots, n\}\mbox{ and }j=n+1,  \\
 t U_{t}\qquad\quad\quad\quad\,\,\,\text{ if } i=j=n+1.
\end{cases}
$$
A straightforward (yet very painful) computation shows that
{\allowdisplaybreaks
\begin{align*}
\sigma (\sigma^2+t^2)^{\frac{\a-4}{4}}\mathcal{L}g&= 2\|D_2-D_1\|^2 - 2\|D_1\|^2 + 2 \sigma U_t \de_t\left(\mathcal{L}U\right) + 2\sum_{j=1}^n s_j U_j \de_{s_j}\left(\mathcal{L}U\right)\\
&+|\nabla_s U|^2+nU_t^2 - (n+2) \left(\sigma U^2_t + \sum_{j=1}^n s_j U^2_j\right) \left(\frac{1}{\sigma}-\frac{4-\a}{4}\frac{2\sigma}{\sigma^2+t^2}\right)\\
&+\sigma \left(\sigma U^2_t + \sum_{j=1}^n s_j U^2_j\right) \left(\frac{2}{\sigma^2}-\frac{(4-\a)\a}{16}\frac{4\sigma^2}{(\sigma^2+t^2)^2}-\frac{(4-\a)\a}{16}\frac{4t^2}{(\sigma^2+t^2)^2}\right).
\end{align*}
Plugging the previous identity in \eqref{fromhere}, keeping in mind the definition of the previously defined matrices, and using $\LH u=(2n+\a)\Fa$ outside of $\{|x|=0\}$, we obtain
\begin{align}\label{daqui}
\frac{\Fa}{16} \LH v&= 2\left(\|M\|^2-\frac{1}{n+1}({{\rm trace}}(M))^2  \right) + \frac{2}{n+1}({{\rm trace}}(M))^2 \\
&- 2\left\|E_1 + \frac{2\sigma}{\sigma^2+t^2} \frac{\a-4}{4} E_2\right\|^2\notag\\
&+ 2 \sigma U_t \de_t\left(\frac{2n+\a}{4}\sigma (\sigma^2+t^2)^{\frac{\a-4}{4}}\right) + 2\sum_{j=1}^n s_j U_j \de_{s_j}\left(\frac{2n+\a}{4}\sigma (\sigma^2+t^2)^{\frac{\a-4}{4}}\right)\notag\\
&+|\nabla_s U|^2+nU_t^2 - (n+2) \left(\sigma U^2_t + \sum_{j=1}^n s_j U^2_j\right) \left(\frac{1}{\sigma}-\frac{4-\a}{4}\frac{2\sigma}{\sigma^2+t^2}\right)\notag\\
&+\sigma \left(\sigma U^2_t + \sum_{j=1}^n s_j U^2_j\right) \left(\frac{2}{\sigma^2}-\frac{(4-\a)\a}{16}\frac{4\sigma^2}{(\sigma^2+t^2)^2}-\frac{(4-\a)\a}{16}\frac{4t^2}{(\sigma^2+t^2)^2}\right)\notag\\
&-\frac{\a(2n+\a)}{16}\sigma^2 (\sigma^2+t^2)^{\frac{2\a-8}{4}}.\notag
\end{align}
}
If we now substitute into \eqref{daqui} the following two identities
\begin{align}\label{traceformula}
&{{\rm trace}}(M)={{\rm trace}}(D_2)-{{\rm trace}}(E_1)-\frac{2\sigma}{\sigma^2+t^2} \frac{\a-4}{4}{{\rm trace}}(E_2)\\
&=\mathcal{L}U+\sum_{j=1}^n U_j\left(-\frac{1}{2}-\frac{3}{2}\frac{s_j}{\sigma}-\frac{2\sigma s_j}{\sigma^2+t^2} \frac{\a-4}{4}\right) 
-\frac{2\sigma t}{\sigma^2+t^2} \frac{\a-4}{4}U_t\notag\\
&=\frac{2n+\a}{4}\sigma (\sigma^2+t^2)^{\frac{\a-4}{4}}+\frac{1}{2}\sum_{j=1}^n U_j\left( 2s_j\left( \frac{4-\a}{4} \frac{2\sigma}{\sigma^2+t^2}-\frac{3}{2\sigma}\right)   -1\right)+\frac{4-\a}{4}\frac{2\sigma t}{\sigma^2+t^2} U_t,\notag
\end{align}
{\allowdisplaybreaks
\begin{align*}
&\left\|E_1 + \frac{2\sigma}{\sigma^2+t^2} \frac{\a-4}{4} E_2\right\|^2\\
&=\|E_1\|^2 +  \frac{4\sigma^2}{(\sigma^2+t^2)^2} \left(\frac{4-\a}{4}\right)^2\|E_2\|^2 - \frac{4-\a}{4} \frac{4\sigma}{\sigma^2+t^2}{{\rm trace}}(E_1 E_2)\\
&=\sum_{i,j=1}^{n} \left(\frac{\sqrt{s_i s_j}}{\sigma}\frac{U_i+U_j}{2}-\frac{1}{2}U_i\delta_{ij}\right)^2 + \frac{\left(\sum_{j=1}^n s_jU_j\right)^2}{4\sigma^2}\\
&+ \frac{4\sigma^2}{(\sigma^2+t^2)^2} \left(\frac{4-\a}{4}\right)^2 \left[\sum_{i,j=1}^{n} \frac{s_i s_j}{4} \left(U_i+U_j\right)^2 + \frac{1}{2} \sum_{j=1}^n\frac{s_j}{\sigma}\left(\sigma U_t + t U_j\right)^2 + t^2 U_t^2\right]\\
&-\frac{4-\a}{4} \frac{4\sigma}{\sigma^2+t^2}\left[ \sum_{i,j=1}^{n}\left(\frac{s_i s_j}{4\sigma} \left(U_i+U_j\right)^2-\frac{\sqrt{s_i s_j}}{4}\delta_{ij} U_i(U_i+U_j) \right) + \frac{t}{2\sigma} U_t\sum_{j=1}^n s_jU_j \right],
\end{align*}
}
and we keep track of all the constants involved, a long (yet very direct) computation shows that
{\allowdisplaybreaks
\begin{align*}
\frac{\Fa}{16} \LH v &= 2\left(\|M\|^2-\frac{1}{n+1}({{\rm trace}}(M))^2  \right)+\frac{2n+\a}{n^2(n+1)}\left(\sum_{j=1}^n U_j - \frac{n}{2}\sigma (\sigma^2+t^2)^{\frac{\a-4}{4}}\right)^2 \\
&+ \frac{(4-\a)(2n+\a)(n-1)}{4n+4}\frac{\sigma^2}{(\sigma^2+t^2)^2}\left(\sum_{j=1}^n s_j U_j + t U_t - \frac{1}{2}(\sigma^2+t^2)^{\frac{\a}{4}}\right)^2\\
&+ \frac{(2n+\a)(n-2)}{2n(n+1)} \sum_{j=1}^n \left(1-n\frac{s_j}{\sigma}\right)\left(U_j-\frac{1}{2}\sigma (\sigma^2+t^2)^{\frac{\a-4}{4}}\right)^2   \\
&+ \frac{4-\a}{4} \sum_{j=1}^n \left(\frac{(n-1)(2n+4)}{n+1} \frac{\sigma s_j}{(\sigma^2+t^2)^2} + \frac{4-2n}{n+1}\frac{s_j}{\sigma(\sigma^2+t^2)}\right)\left(\sigma U_t - t U_j\right)^2   \\
&+\frac{4-\a}{8(n^2-1)}\frac{1}{\sigma^2+t^2}\sum_{i\neq j}\left(t U_i+tU_j-2\sigma U_t\right)^2\\
&+\sum_{i\neq j}\left(U_i-U_j\right)^2\left[\frac{(4-\a)(n-2)^2}{8n^2(n^2-1)} + \frac{8+4n-n^2}{4n^2(n+1)}+\frac{3n-6}{4n+4}\frac{s_is_j}{\sigma^2}\right.\\
&\hspace{3cm}- \frac{3}{4n+4}\frac{s_i+s_j}{\sigma} + \frac{4-\a}{4n+4}\frac{\sigma (s_i+s_j)}{\sigma^2+t^2}+\frac{(4-\a)(4-2n)}{4n+4}\frac{s_is_j}{\sigma^2+t^2}\\
&\hspace{3cm}\left.- \frac{4-\a}{8(n^2-1)}\frac{\sigma^2}{\sigma^2+t^2}+ \frac{(4-\a)(n-1)(n+2)}{4n+4}\frac{\sigma^2 s_is_j}{(\sigma^2+t^2)^2}\right].
\end{align*}
}
The previous identity coincides with \eqref{magik} once we recall our fixed notations. We stress explicitly that the previous formula makes sense for $n\geq 2$ (whereas for $n=1$ the last terms involving a summation over the indices $\{i\neq j\}$ lose their meaning). When $n=1$, the same computation leading to the previous formula gives the following substitute formula
\begin{align*}
\frac{\Fa}{16} \LH v &= 2\left(\|M\|^2-\frac{1}{2}({{\rm trace}}(M))^2  \right)+\frac{2+\a}{2}\left(U_1 - \frac{1}{2}\sigma (\sigma^2+t^2)^{\frac{\a-4}{4}}\right)^2 \\
&+ \frac{4-\a}{2}\frac{1}{\sigma^2+t^2}\left(\sigma U_t - t U_1\right)^2   \notag
\end{align*}
which coincides with \eqref{magikuno}. The proof is thus complete.
\end{proof}

The right hand side in \eqref{magik} displays a long list of \emph{sum of squares}: a careful check of the constants involved leads to the desired sub-harmonicity of the function $v$ for $0<\alpha\leq 4$ at least in the cases of cylindrically symmetric sets in $\Hn$ and toric symmetric sets in $\H^2$.

\begin{corollary}\label{corricorri}
Fix $c>0$ and $0<\alpha\leq 4$. Let $u$ be a solution of \eqref{eq: overdet} in $\Omega$ according to \eqref{competitor}-\eqref{solution}. Then the function $v$ in \eqref{Pfunction} is $\LH$-subharmonic in $\Omega\smallsetminus \{|x|=0\}$ in the following cases
\begin{itemize}
\item[(i)] $\Omega$ is cylindrically symmetric, for any $n\geq 1$;
\item[(ii)] $\Omega$ is toric symmetric, for $n\in\{1,2\}$.
\end{itemize}
\end{corollary}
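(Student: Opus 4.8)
My plan is to obtain the conclusion $\LH v\ge 0$ directly from the pointwise identities of Lemma~\ref{lemmalemma}, reducing the problem to a sign check. First I would observe that, for a solution $u$ to \eqref{eq: overdet} in a competitor set $\Omega$, the assumptions of Section~\ref{sec2} guarantee $u\in C^\infty(\Omega\smallsetminus\{0\})$, hence $u$ is smooth on $\Omega\smallsetminus\{|x|=0\}$; moreover the invariance of both the operator $\LH$ and the weight $\Fa$ under the rotations defining \eqref{cylsim} (resp. \eqref{torsim}), together with the uniqueness of the weak solution to \eqref{weak}, forces $u$ to inherit the symmetry of $\Omega$. Thus $u(x,t)=U(s,t)$ for a smooth $U$ as in Lemma~\ref{lemmalemma}, and in the cylindrical case $U$ depends on $s=(s_1,\dots,s_n)$ only through $\sigma=\sum_j s_j=|x|^2$, so that $U_1=\cdots=U_n=:U_\sigma$. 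Since $\Fa>0$ on $\Omega\smallsetminus\{|x|=0\}$, the sign of $\LH v$ equals the sign of the right-hand side of \eqref{magik} (for $n\ge 2$) or \eqref{magikuno} (for $n=1$), so it suffices to show that every square in those identities carries a nonnegative coefficient when $0<\alpha\le 4$.

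The case $n=1$ settles (i) and (ii) at once: in \eqref{magikuno} the three contributions are the Newton term $2(\|M\|^2-\tfrac12({\rm trace}(M))^2)\ge 0$ (Cauchy--Schwarz for the symmetric matrix $M$) and two squares with coefficients $\tfrac{2+\a}{2}>0$ and $\tfrac{4-\a}{2}\ge 0$. For (i) with $n\ge 2$ I would use the equality $U_1=\cdots=U_n$ in \eqref{magik}: it annihilates the whole final $\sum_{i\ne j}(U_i-U_j)^2[\cdots]$ block, and it kills the term with factor $(1-n s_j/\sigma)$ because $\sum_j(1-n s_j/\sigma)=0$ once the squared factors $(U_j-\tfrac12\Fa)^2$ are equal. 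The Newton term and the coefficients $\tfrac{2n+\a}{n^2(n+1)}$ and $\tfrac{(4-\a)(2n+\a)(n-1)}{4n+4}$ are plainly nonnegative. The only delicate point is the line in $(\sigma U_t-tU_j)^2$, whose coefficient $\tfrac{4-2n}{n+1}$ is negative for $n\ge 2$; rather than estimate it alone I would add it to the $\sum_{i\ne j}(tU_i+tU_j-2\sigma U_t)^2$ term, which under $U_i=U_j$ reduces to $4n(n-1)(\sigma U_t-tU_\sigma)^2$. Using $\sum_j s_j=\sigma$, the two collapse to
\[
\frac{4-\a}{n+1}\,(\sigma U_t-t U_\sigma)^2\left[\frac{(n-1)(2n+4)}{4}\frac{\sigma^2}{(\sigma^2+t^2)^2}+\frac{1}{\sigma^2+t^2}\right],
\]
where the offending $\tfrac{4-2n}{4}$ is exactly cancelled by $\tfrac{n}{2}$ and the bracket is a sum of nonnegative terms; hence the combined contribution is $\ge 0$ for $\alpha\le 4$.

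For (ii) with $n=2$ I would substitute $n=2$ into \eqref{magik} and exploit a providential set of vanishings: the factors $(n-2)$, $(3n-6)$ and $(4-2n)$ all vanish at $n=2$, so that the $(1-ns_j/\sigma)$ term disappears, the dangerous part of the $(\sigma U_t-tU_j)^2$ line disappears (leaving only $\tfrac{8}{3}\tfrac{\sigma s_j}{(\sigma^2+t^2)^2}\ge 0$), and three of the summands inside the cross-term bracket drop out. The surviving coefficients $\tfrac{4+\a}{12}$, $\tfrac{(4-\a)(4+\a)}{12}$, $\tfrac{4-\a}{24}$ are nonnegative. The decisive point is the remaining $\sum_{i\ne j}(U_i-U_j)^2[\cdots]$ block: since for $n=2$ the only indices are $i,j\in\{1,2\}$ with $s_1+s_2=\sigma$, I would substitute $\tfrac{s_i+s_j}{\sigma}=1$, whereupon the constant and linear contributions cancel and the bracket collapses to
\[
\frac{4-\a}{24}\,\frac{\sigma^2}{\sigma^2+t^2}\left(1+\frac{8\,s_1 s_2}{\sigma^2+t^2}\right)\ge 0,
\]
again nonnegative precisely for $\alpha\le 4$. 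Collecting the terms gives $\LH v\ge 0$ in both regimes.

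The main obstacle is not a single inequality but the bookkeeping of signs, since several coefficients in \eqref{magik} (those carrying $4-2n$ and $n-2$) are negative for $n\ge 2$ and the estimate cannot be run term by term. The argument succeeds only thanks to two structural cancellations: the identity $U_i=U_j$ in the cylindrical case, and the identity $s_1+s_2=\sigma$ special to $n=2$ in the toric case. These same cancellations explain why the toric result stops at $n=2$: for $n\ge 3$ one generally has $s_i+s_j<\sigma$, and the cross-term bracket in \eqref{magik} need no longer be nonnegative.
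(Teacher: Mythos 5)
Your proposal is correct and follows essentially the same route as the paper: after establishing that $u$ inherits the symmetry of $\Omega$ via uniqueness of the weak solution, you specialize the identities of Lemma~\ref{lemmalemma} and check that all coefficients are nonnegative for $0<\alpha\le 4$, using the Newton-type inequality \eqref{matrixineq} for the matrix term. Your inline collapses are exactly the paper's displayed reductions: combining the $(4-2n)$-term with the $\sum_{i\neq j}(tU_i+tU_j-2\sigma U_t)^2$ block under $U_i=U_j$ reproduces formula \eqref{cyln}, and your $n=2$ bracket $\frac{4-\alpha}{24}\frac{\sigma^2}{\sigma^2+t^2}\bigl(1+\frac{8 s_1 s_2}{\sigma^2+t^2}\bigr)$ is precisely the cross-term coefficient in \eqref{tordue}.
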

\begin{proof}
We first notice that the symmetries of the domain $\Omega$ (either toric or cylindrical) are inherited by the solution $u$ of \eqref{eq: overdet}. As a matter of fact, if $\Omega$ is toric symmetric and $(x,t)\in\Omega$ (with $|x|\neq 0$), one can consider any $(x',t)$ with the property that $(x')^2_k+(x')^2_{n+k}=x^2_k+x^2_{n+k}$ for each $k\in\{1,\ldots,n\}$ and one can look at the orthogonal transformation from $\R^{2n}$ onto $\R^{2n}$ such that it acts blockwise and the $k$-th block consists of the rotation sending $(x_k,x_{n+k})\mapsto (x'_k,x'_{n+k})$. Since such linear transformation commutes with the matrix $J$ in \eqref{defJ} and preserves the norms in $\R^{2n}$, then it commutes with the subLaplacian $\LH$ (see \eqref{explicitDelta}). Hence, the composition of the solution $u$ with this rotation solves the same problem \eqref{eq: overdet} (since $\Omega$ is toric symmetric, and the source and Neumann data depend on $x$ as a function of $|x|$): the uniqueness of the solutions to \eqref{weak} implies that $u(x,t)=u(x',t)$ and yields the toric symmetry of the solution. The same holds for cylindrically symmetric sets $\Omega$, as for $(x,t)\in\Omega$ (with $|x|\neq 0$) and any $(x',t)$ with $|x|=|x'|$ one can find an orthogonal transformation from $\R^{2n}$ onto $\R^{2n}$ which commutes with $J$ and sends $x\mapsto x'$ (the same argument applies and also in this case $u(x,t)=u(x',t)$ from the uniqueness of the solutions). In both scenarios we can then consider symmetric solutions and apply Lemma \ref{lemmalemma}.\\
We first deal with case $(ii)$. Since $\Omega$ is toric symmetric, from the smoothness of $u$ outside of the $t$-axis we can write $u(x,t)=U(s,t)$ for some function $U$ which is smooth outside of $\{s=0\}$ (we use the notations settled in the proof of Lemma \ref{lemmalemma}). If $n=1$ we have the formula \eqref{magikuno}. For $n=2$ (so $s=(s_1,s_2)$), by formula \eqref{magik} we have instead
{\allowdisplaybreaks
\begin{align}\label{tordue}
\frac{\Fa}{16} \LH v &= 2\left(\|M\|^2-\frac{1}{3}({{\rm trace}}(M))^2  \right)+\frac{4+\a}{12}\left(U_1 + U_2 - \Fa\right)^2 \\
&+ \frac{(4-\a)(4+\a)}{12}\frac{|x|^4}{\rho^8}\left(\sum_{j=1}^2 (x^2_j+x^2_{2+j}) U_j + t U_t - \frac{1}{2}\rho^\a\right)^2\notag\\
&+ \frac{4-\a}{4} \sum_{j=1}^2 \frac{8|x|^2(x^2_j+x^2_{2+j})}{3\rho^8}\left(|x|^2U_t - t U_j\right)^2 +\frac{4-\a}{12\rho^4}\left(t U_1+tU_2-2|x|^2U_t\right)^2\notag\\
&+2\left(U_1-U_2\right)^2\left[\frac{4-\a}{24}\frac{|x|^4}{\rho^4}+\frac{4-\a}{3}\frac{|x|^4(x^2_1+x^2_{2+1})(x^2_2+x^2_{2+2})}{\rho^8}\right].\notag
\end{align}
}
Keeping in mind that
\begin{equation}\label{matrixineq}
\|M\|^2\geq \frac{1}{n+1}({{\rm trace}}(M))^2 \quad\mbox{ for any $(n+1)\times (n+1)$ symmetric matrix $M$},
\end{equation}
it is clear that the right hand sides in \eqref{magikuno} and \eqref{tordue} are nonnegative when $\alpha$ lies in the interval $(0,4]$.\\
We not turn the attention to the case $(i)$, and we fix $n\geq 1$. Since $\Omega\subset \Hn$ is cylindrically symmetric, from the smoothness of $u$ outside of the $t$-axis we can write $u(x,t)=W(\sigma,t)$ for some function $W$ which is smooth in $\{\sigma>0\}$ (as in the proof of Lemma \ref{lemmalemma} we use the notation $\sigma=|x|^2$). We can then exploit \eqref{magik} once we have in mind that
$$
\sigma=\sum_{j=1}^{n}s_j\qquad\mbox{and}\qquad W_\sigma=U_i\mbox{ for each }i\in\{1,\ldots,n\}
$$
where $U(s,t)=W(\sigma,t)$. Plugging this information in \eqref{magik} and performing a straightforward computation, we obtain
\begin{align}\label{cyln}
\frac{\Fa}{16} \LH v &= 2\left(\|M\|^2-\frac{1}{n+1}({{\rm trace}}(M))^2  \right)+\frac{2n+\a}{n+1}\left(W_\sigma - \frac{1}{2}\Fa\right)^2 \\
&+ \frac{(4-\a)(2n+\a)(n-1)}{4n+4}\frac{|x|^4}{\rho^8}\left(|x|^2 W_\sigma + t W_t - \frac{1}{2}\rho^\a\right)^2\notag\\
&+ \frac{4-\a}{4\rho^4}\left(|x|^2W_t - t W_\sigma\right)^2  \left(\frac{(n-1)(2n+4)}{n+1} \frac{|x|^4}{\rho^4} + \frac{4}{n+1}\right).  \notag
\end{align}
We remark that \eqref{cyln} coincides (as it should) with \eqref{magikuno} for $n=1$. Exactly as before, by \eqref{matrixineq}, the right-hand side of \eqref{cyln} is nonnegative when $\alpha\in (0,4]$.
\end{proof}

We now want to discuss the local behaviour of the function $v$ around the singular set provided by the $t$-axis.

\begin{lemma}\label{regularity}
Let $n\geq 1 $ and $\a>0$. Let $u$ be a weak solution to \eqref{weak} in a competitor set $\Omega$, and denote $v=\frac{1}{\Fa}|D_H u|^2 -\a u$. Then we have
\begin{itemize}
\item[i)] for $n\geq 2$, $v$ is locally in $L^1$ for any $\a\in (0,4]$;
\item[ii)] for $n\geq 2$, $\LH v$ is locally in $L^1$ for any $\a\in (\frac{3}{4},4]$;
\item[iii)] for $n\geq 1$, if $\O$ is toric symmetric, then $v$ is locally bounded for $\a\in (2,4]$.
\end{itemize}
\end{lemma}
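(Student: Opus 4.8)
The plan is to understand the local integrability of $v$ and $\LH v$ near the singular set $\{|x|=0\}$ by studying the growth of the quantity $\frac{1}{\Fa}|D_H u|^2$, since the term $-\a u$ is smooth (indeed continuous up to the $t$-axis as $u\in C(\overline\Omega)$) and contributes no singularity. Recall $\Fa=|x|^2\rho^{\a-4}$, so the problematic factor is $\frac{|D_H u|^2}{|x|^2\rho^{\a-4}}$; on the $t$-axis $\rho=|t|^{1/2}$ stays bounded away from $0$ locally (away from the origin, which for $\a<4$ we have placed in the interior but which is a measure-zero point anyway), so the genuine singularity is the $|x|^{-2}$ weight against $|D_H u|^2$. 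First I would quantify how fast $|D_H u|^2$ vanishes as $|x|\to 0$. Writing $D_H u = D_x u + 2(Jx)\,T u$, on $\{|x|=0\}$ the second piece vanishes, so $|D_H u|^2 = |D_x u|^2 + O(|x|^2)$, and the decay of $|D_x u|$ is dictated by the PDE $\LH u = (Q+\a-2)\Fa$ together with the smoothness of $u$ away from the axis and interior regularity estimates. The right object to control is therefore the ratio $|D_x u|^2/|x|^2$ near the axis.

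For part (i) and (ii) I would exploit the elliptic-type interior estimates for the regularized equation combined with the explicit source growth. Since $\Fa\in L^p$ for $p>Q/2$ when $0<\a<2$ and $\Fa\in L^\infty$ for $\a\ge2$, standard subelliptic regularity gives $u\in C^{1,\gamma}$ (or better) in the interior away from the origin, and in particular $D_H u$ is locally bounded; the only issue is integrability of $\frac{|D_H u|^2}{|x|^2}$ in a full neighborhood of a point on the axis. I would pass to the cylindrical/toric variables $s_j=x_j^2+x_{n+j}^2$ and integrate in polar-type coordinates: the Jacobian of $x\mapsto (s_1,\dots,s_n,\text{angles})$ carries extra powers of $|x|$ (roughly $\prod s_j^{\,0}\cdot$ angular factors, giving a net $|x|^{2n-?}$ weight in the radial direction), and for $n\ge2$ this surplus of powers of $|x|$ from the $2n$-dimensional measure beats the $|x|^{-2}$ singularity. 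Concretely, estimating $|D_H u|^2\lesssim |D_x u|^2$ and using $|D_x u|\lesssim |x|$ near the axis (which follows because $u=U(s,t)$ is smooth in $s$ and $\partial_{x_j}u = 2x_j U_j$, so $D_x u$ vanishes linearly in $x$), one gets $\frac{|D_H u|^2}{|x|^2}\lesssim \sum_j U_j^2$ bounded, whence (i) is immediate for all $\a\in(0,4]$; for (ii) one differentiates once more, and the worst terms in $\LH v$ (read off from the explicit formula \eqref{magik}, which exhibits factors like $\rho^{-8}|x|^4$ and $\rho^{-4}$) produce at most a $|x|^{-2}$-type singularity that is $L^1$ against $d\xi$ precisely when the power of $\a$ in the relevant coefficient is controlled, yielding the threshold $\a>\frac34$.

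For part (iii), the toric symmetry lets me write $u=U(s,t)$ with $U$ smooth in $s$ up to $\{s=0\}$, and then $|D_H u|^2 = 4\sigma U_t^2 + 4\sum_j s_j U_j^2$ (as computed in the proof of Lemma \ref{lemmalemma}). Dividing by $\Fa=\sigma\rho^{\a-4}$ gives $\frac{1}{\Fa}|D_H u|^2 = 4\rho^{4-\a}\Big(U_t^2 + \frac{\sum_j s_j U_j^2}{\sigma}\Big)$, and since $\frac{\sum_j s_j U_j^2}{\sigma}$ is a convex average of the bounded quantities $U_j^2$ it stays bounded as $\sigma\to0$; the factor $\rho^{4-\a}=(\sigma^2+t^2)^{(4-\a)/4}$ is locally bounded exactly when $4-\a\ge0$, i.e. $\a\le4$, and near the axis away from the origin it is in fact bounded for all $\a>0$. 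The reason the statement restricts to $\a\in(2,4]$ rather than all of $(0,4]$ is that I must also verify that $U_t$ and $U_j$ themselves remain bounded up to the axis, which requires the solution to be $C^1$ in the $s$-variables at $s=0$; this boundary (i.e. axis) regularity of $U$ is the delicate point, and it is cleanest precisely when $\Fa$ is bounded, namely $\a\ge2$, so that the interior estimates for \eqref{weak} furnish a genuine $C^1$ bound on $U$ including the characteristic/axis set. The main obstacle throughout is thus not the algebra but pinning down the precise decay rate of $D_H u$ (equivalently, the boundedness of the first derivatives of $U$) as one approaches the singular $t$-axis, since the axis consists of characteristic points where the subelliptic regularity theory is most delicate; I would handle this by combining the a-priori symmetry (which reduces to the operator $\mathcal L$ in the $(s,t)$ variables) with the interior Schauder/$L^p$ estimates for \eqref{weak}, choosing the ranges of $\a$ so that the source term $\Fa$ falls in the regularity class needed for each conclusion.
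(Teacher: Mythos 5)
There is a genuine gap, and it sits in items (i)--(ii). These two items carry \emph{no} symmetry hypothesis on $\O$ --- only item (iii) does --- yet your argument for them is founded on writing $u=U(s,t)$ with $U$ smooth in $s$ up to the axis (and, for (ii), on reading off terms from formula \eqref{magik}, which is itself derived only for toric symmetric solutions). As stated, the lemma must be proved for an arbitrary competitor set, so the symmetry reduction is not available. The paper's proof of (i)--(ii) uses no symmetry at all: by subelliptic Calder\'on--Zygmund theory the second horizontal derivatives $X_iX_ju$ and $Tu$ lie in $L^p_{\rm loc}$ for every $p<\frac{Q}{2-\a}$ (every $p<\infty$ if $\a\geq 2$), by the Folland--Stein Sobolev embedding $|D_H u|$ is locally bounded for $\a>1$ (locally in $L^q$ for $q<\frac{Q}{1-\a}$ otherwise), and then $v\in L^1_{\rm loc}$ follows from H\"older's inequality against $|x|^{-2}\in L^p_{\rm loc}$ for $p<n$, using $\frac1n+\frac2Q<1$; this is exactly where $n\geq 2$ enters. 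Moreover, even granting symmetry, your key pointwise bound ($U_j$ bounded, hence $|D_x u|\lesssim |x|$ uniformly) is false near the origin when $\a<2$: the model solution $u_\a$ has $U_j=\frac{\sigma}{2}\left(\sigma^2+t^2\right)^{\frac{\a-4}{4}}$, which on $\{t=0\}$ behaves like $\frac12\sigma^{\frac{\a-2}{2}}\to\infty$ as $\sigma\to 0^+$. Hypoellipticity gives smoothness of $U$ up to the axis only \emph{away} from $0$, since $\Fa$ is singular at $0$; and since $0\in\O$ is part of the setup for $\a<4$, integrability near $0$ is precisely what must be proved --- dismissing the origin as ``measure zero'' does not control the integral of an unbounded function there.

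For item (ii), the threshold $\a>\frac34$ is asserted rather than derived. In the paper it is extracted from an explicit expansion of $\LH\bigl(\Fa^{-1}|D_H u|^2\bigr)$ (valid without symmetry, using \eqref{derfa}), whose worst term $\Fa^{-2}\,X_k\Fa\, X_kX_ju\, X_ju$ is bounded by $|x|^{-3}|X_kX_ju|\,|X_ju|$ and is locally integrable via H\"older exactly when $\frac{3}{2n}+\frac{2-3/4}{Q}+\frac{1-3/4}{Q}\leq 1$, which holds for $n\geq 2$; one must also track the third-order term $D_HTu\in L^p_{\rm loc}$, $p<\frac{Q}{3-\a}$. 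None of this bookkeeping appears in your sketch, and it cannot be replaced by \eqref{magik}. Your item (iii), by contrast, is essentially the paper's proof (symmetry reduction, $\Fa^{-1}|D_Hu|^2=4\rho^{4-\a}\bigl(U_t^2+\sum_j\frac{s_j}{\sigma}U_j^2\bigr)$ bounded away from $0$ when $\a\leq 4$, plus regularity at the origin for large $\a$), with one imprecision: what makes $U_t$ and $U_j$ bounded at $0$ is the H\"older continuity of $\Fa$ (true for $\a>2$) feeding the subelliptic Schauder theory to give H\"older second horizontal derivatives, not mere boundedness of $\Fa$ ($\a\geq 2$); this is exactly why $\a=2$ is excluded from the range in (iii).
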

\begin{proof}
Since $u$ is bounded and $\LH u\in L^p$ for any $p<\frac{Q}{2-\alpha}$ (if $0<\a<2$) or just any $p<\infty$ (if $\a\geq 2$), the statement concerns the behaviour of the function
$$
g=\frac{1}{\Fa}|D_H u|^2.
$$
We recall that, by the subelliptic counterpart of the classical Calderon-Zygmund theory, all the second derivatives of $u$ along the horizontal direction $X_j$'s as well as $Tu$ belong locally to $L^p$ for any $p<\frac{Q}{2-\alpha}$ (if $0<\a<2$, and any $p$ otherwise). Moreover, by the Sobolev embedding of the Folland-Stein spaces we then have
$$
|D_H u|\, \mbox{ belongs locally to } \begin{cases}
L^\infty \quad\,\,\,\text{ if } \alpha> 1,  \\
L^q \quad\,\,\,\text{ for any }q<\frac{Q}{1-\a}\mbox{ if }0<\alpha\leq 1.  \\
\end{cases}
$$
Hence, item $i)$ (i.e. $g\in L^1$) easily follows from H\"older's inequality once we notice that, for $n\geq 2$ the function $|x|^{-2}$ is locally in $L^{p}$ for $p\in (1,n)$ and
$$
\frac{1}{n}+\frac{2}{Q}<1.
$$
Let us now turn the attention on item $ii)$. To this aim, we should first compute
\begin{align*}
\LH g&= \frac{2}{\Fa}\left[ \left\|\frac{1}{2}\left(X_iX_j+X_jX_i\right)_{i,j}\right\|^2 + 4 (Tu)^2 \left\|J\right\|^2 + \left\langle D_H u , D_H \left(\LH u \right) \right\rangle + 8 \left\langle D_H u , J D_H T u \right\rangle \right]\\
&-\frac{4\sum_{j,k=1^n}X_k \Fa X_kX_ju X_j u}{\Fa^2} + \frac{|D_H u|^2}{\Fa^3}\left(2|D_H \Fa|^2 - \Fa\LH\Fa\right),
\end{align*}
where we exploited the non-commutativity relation $[X_j,X_i]=4J_{ij} T$. In order to check whether $\LH g$ is locally in $L^1$ we just use the previously mentioned integrability properties for the derivatives of $u$, together with the explicit expressions in \eqref{derfa}, and H\"older's inequality. We separately mention the presence of the term $D_H T u$ which is a third derivative in terms of horizontal derivatives and belongs locally to $L^p$ for any $p<\frac{Q}{3-\a}$ since this is the threshold for the integrability of $D_H \Fa$. We also mention that the condition $\a > \frac{3}{4}$ comes from the term $(\Fa)^{-2} X_k \Fa X_kX_ju X_j u $: as a matter of fact such term is locally bounded above by $|x|^{-3} |X_kX_j u||X_j u|$ which is integrable for $\a>\frac{3}{4}$ since
$$
\frac{3}{2n}+ \frac{2-\frac{3}{4}}{Q}+ \frac{1-\frac{3}{4}}{Q}\leq 1 \mbox{ for }n\geq 2.
$$
We are thus left with the proof of item $iii)$ which concerns the local boundedness of $g$ for $n\geq 1$. As in the proof of Corollary \ref{corricorri}, since $\O$ is toric symmetric, the function $u$ inherits such symmetry and we can write $u$ in terms of the function $U$. We know from (the notations of) Lemma \ref{lemmalemma} that the function $g$ is described by 
$$4\frac{\sigma U^2_t(s,t) + \sum_{j=1}^n s_j U^2_j(s,t)}{\sigma (\sigma^2+t^2)^{\frac{\a-4}{4}}}=4(\sigma^2+t^2)^{\frac{4-\a}{4}}\left(U^2_t(s,t)+\sum_{j=1}^n \frac{s_j}{\sigma} U^2_j(s,t)\right).$$
Since $u$ and $U$ are $C^\infty$-smooth away from $0$, we deduce that $g$ is locally bounded around any point in $\O\smallsetminus\{0\}$ if $\a\leq 4$. If we have $\a>2$ then we claim that the function $g$ is bounded also around $0$ (whenever $0\in\O$). As a matter of fact, for $\a>2$ the function $\Fa$ is H\"older continuous and by the subelliptic analogue of the classical interior Schauder theory we have that the horizontal second derivatives of $u$ are also H\"older continuous. Therefore $Tu=U_t$ and $U_j$ are both well-defined and bounded in $0$ (this can be seen from a Taylor expansion of $u$ around $0$, see e.g. \cite[Theorem 20.3.2]{BLU}).    
\end{proof}

\section{Main results}\label{sec4}

In this section we provide the details for Theorem \ref{Tuno} and Theorem \ref{Tdue}. We start by providing a more general version of Theorem \ref{Tuno} where we assume the cylindrical symmetry for the set $\O$ and some a-priori knowledge for the behaviour of $v$ around $0$.

\begin{theorem}\label{Tunoalpha}
Fix $n\geq 1$, $\alpha\in (0,4)$, and $c>0$. Let $\Omega\subset \Hn$ be a competitor set in the sense of \eqref{competitor}. Assume that $\Omega$ is cylindrically symmetric, and $0\in\Omega$. Assume also that at least one of the following condition holds true in a neighborhood of $0$: $\LH v\in L^1$ or $v$ bounded. If there exists a solution $u$ to \eqref{eq: overdet} in the sense of \eqref{solution}, then 
$$
\Omega=B_{R}(0)\qquad\mbox{with}\qquad R= c^{\frac{2}{\alpha}}
$$
and
$$
u(x,t)=\frac{\left(|x|^4+t^2\right)^{\frac{\alpha}{4}}- c^2}{\alpha}.
$$
\end{theorem}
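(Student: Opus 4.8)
The plan is to follow the Weinberger-style strategy adapted to this sub-Riemannian setting, using the P-function $v=\frac{1}{\Fa}|D_H u|^2-\a u$ as the central object. First I would exploit the integral identity \eqref{medianulla}, which states that $v$ is in $\Fa$-weighted average equal to its boundary value $c^2$. The idea is to combine this average-zero property of $v-c^2$ with the subharmonicity of $v$ established in Corollary \ref{corricorri} (case (i), cylindrical symmetry, valid for $\a\in(0,4]$) to force $v\equiv c^2$ everywhere in $\Omega\smallsetminus\{|x|=0\}$. Concretely, since $\LH v\geq 0$ away from the $t$-axis and $v\to c^2$ at the non-characteristic boundary, a maximum-principle argument should give $v\leq c^2$ in $\Omega$; but then $\int_\Omega (v-c^2)\Fa\,d\xi=0$ with $v-c^2\leq 0$ and $\Fa\geq 0$ forces $v\equiv c^2$ identically.

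The delicate point—and what I expect to be the main obstacle—is that $v$ is singular on the $t$-axis $\{|x|=0\}$, so the maximum principle cannot be applied naively on all of $\Omega$; the subharmonicity only holds on $\Omega\smallsetminus\{|x|=0\}$. This is exactly where the extra hypothesis ($\LH v\in L^1$ near $0$, or $v$ bounded near $0$) enters. The plan is to show that the singular set is removable for the subharmonicity: if $v$ is locally bounded (or $\LH v\in L^1$) near the $t$-axis, then $v$ extends to an $\LH$-subharmonic function across $\{|x|=0\}$ in the distributional sense, so that the weak maximum principle and the mean-value/average argument apply on the full domain. I would invoke the capacity/thinness of the $t$-axis (the singular set has the right codimension so that it carries no $\LH$-capacity), together with the local integrability results from Lemma \ref{regularity}, to justify that the distributional $\LH v$ has no singular part concentrated on the axis. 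The boundary behaviour at characteristic points (which lie only on $\{|x|=0\}$ by the interior/exterior gauge-ball assumptions) must also be controlled so that the comparison with $c^2$ survives up to $\de\Omega$.

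Once $v\equiv c^2$ is established, I would read off the rigidity from the sum-of-squares identity \eqref{magik}/\eqref{cyln}. Since $\frac{\Fa}{16}\LH v=0$ and every term on the right-hand side of \eqref{cyln} is a nonnegative multiple of a square (for $\a\in(0,4)$, with the matrix term handled by \eqref{matrixineq}), each square must vanish. In particular the term $\frac{2n+\a}{n+1}\left(W_\sigma-\frac12\Fa\right)^2$ forces $W_\sigma=\frac12\Fa$, the term involving $\left(|x|^2W_t-tW_\sigma\right)^2$ forces the correct angular relation between $W_t$ and $W_\sigma$, and the Newton-type term $\|M\|^2=\frac{1}{n+1}(\mathrm{trace}(M))^2$ pins down $M$ to be a multiple of the identity. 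Integrating these overdetermined first-order conditions, written in the coordinates $(\sigma,t)$ with $\rho=(\sigma^2+t^2)^{1/4}$, I would recover $W(\sigma,t)=\frac{1}{\alpha}\left((\sigma^2+t^2)^{\frac{\alpha}{4}}-c^2\right)$, i.e. $u=\frac{\rho^\alpha-c^2}{\alpha}$. Finally, the Dirichlet condition $u=0$ on $\de\Omega$ together with this explicit form gives $\rho\equiv c^{2/\alpha}$ on $\de\Omega$, which identifies $\Omega=B_R(0)$ with $R=c^{2/\alpha}$. The main technical care throughout lies in the removable-singularity step and in checking that the vanishing of the listed squares is genuinely sufficient to integrate back to the candidate solution.
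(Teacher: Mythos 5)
Your overall architecture --- the P-function $v$ from \eqref{Pfunction}, the weighted mean-value identity \eqref{medianulla}, forcing $\LH v\equiv 0$, and then reading off rigidity from the sum-of-squares identity \eqref{cyln} --- is exactly the paper's strategy, and two of your three steps match the paper's proof: the ``$v$ bounded'' alternative is handled precisely as you propose (maximum principle on $\O\smallsetminus\{0\}$ using that a single point is $\LH$-polar, then \eqref{medianulla} upgrades $v\le c^2$ to $v\equiv c^2$), and your rigidity step (vanishing of $\bigl(W_\sigma-\frac12\Fa\bigr)^2$ and of $\bigl(|x|^2W_t-tW_\sigma\bigr)^2$, then integration of the resulting first-order system) is the paper's. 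Note in passing that under cylindrical symmetry the singular set of $v$ is only the origin, not the whole $t$-axis: one has $\frac{1}{\Fa}|D_H u|^2=4(\sigma^2+t^2)^{\frac{4-\a}{4}}\bigl(W_\sigma^2+W_t^2\bigr)$, so $v\in C^\infty(\O\smallsetminus\{0\})$ and is smooth up to $\de\O$; this is why the hypotheses of the theorem concern only a neighborhood of $0$.

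The genuine gap is in your treatment of the alternative hypothesis ``$\LH v\in L^1$ near $0$''. You claim that pointwise subharmonicity in $\O\smallsetminus\{0\}$ together with $\LH v\in L^1$ implies that the distributional $\LH v$ on $\O$ has no singular part concentrated on the singular set, so that $v$ is subharmonic across the singularity and the maximum principle applies. That implication is false: the function $w=\rho^{2-Q}$ is $\LH$-harmonic in $\O\smallsetminus\{0\}$ (so its pointwise Laplacian is $0\in L^1$), yet its distributional Laplacian on $\O$ is $-\beta^{-1}\delta_0$, which is not nonnegative; correspondingly $w$ violates the maximum principle on every gauge ball $B_R(0)$, since it equals $R^{2-Q}$ on $\de B_R(0)$ but blows up at the center. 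Removability of a polar set for subharmonicity requires $v$ to be bounded above near the set --- which is precisely the \emph{other} alternative hypothesis --- so your argument collapses the two cases into one and only proves the bounded case. The paper handles the $L^1$ case by a different, maximum-principle-free argument (Weinberger's trick): Green's identity together with the boundary conditions, \eqref{defining}, and \eqref{medianulla} gives
\begin{equation*}
\int_\O(-u)\,\LH v\,d\xi=(Q+\a-2)\Bigl(c^2\int_\O \Fa\,d\xi-\int_\O v\,\Fa\,d\xi\Bigr)=0,
\end{equation*}
and since $-u>0$ in $\O$ while $\LH v\ge 0$ almost everywhere by Corollary \ref{corricorri}, this forces $\LH v\equiv 0$ directly. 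To repair your proof you should replace the removable-singularity step by this integral identity (or else supply an upper bound for $v$ near $0$, which is not available under the $L^1$ hypothesis alone).
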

\begin{proof}
As in the proof of Corollary \ref{corricorri} (we keep using the same notations fixed there), we can assume that $u(x,t)=W(\sigma, t)$. In such variables, the function $v$ in \eqref{Pfunction} takes the form
$$
4(\sigma^2+t^2)^{\frac{4-\alpha}{4}}(W_\sigma^2+W_t^2)-\a W.
$$
We notice that $v$ is $C^{\infty}(\O\smallsetminus\{0\})$ and it is smooth up to the boundary (since $0\in\Omega$). From the boundary data for $u$, we have that
\begin{equation}\label{constant}
v\equiv c^2 \qquad\mbox{ on }\de\O
\end{equation}
Let us assume first that $\LH v$ is locally in $L^1$ around $0$, which implies $\LH v\in L^1(\Omega)$. Thanks to this fact, and recalling that $u$ solves \eqref{eq: overdet}, we can obtain from \eqref{constant} and the integral identity \eqref{medianulla}
\begin{align*}
\int_\O (-u)\LH v &= \int_{\O} \left(v\LH u - u\LH v\right) - \int_\O v\LH u \\
&= \int_{\O} \left(v\LH u - u\LH v\right) - (Q+\a-2)\int_\O v\Fa \\
&= \int_{\O} \divH \left( v D_H u - u D_H v\right) - (Q+\a-2)\int_\O v\Fa\\
&= \int_{\de\O} v |D_H u| d\sigma_H - (Q+\a-2)\int_\O v\Fa \\
&= c^2 \int_{\de\O} |D_H u| d\sigma_H - (Q+\a-2)\int_\O v\Fa \\
&= c^2 \int_{\O} \LH u - (Q+\a-2)\int_\O v\Fa \\
&= (Q+\a-2)\left( c^2\int_\O \Fa - \int_\O v\Fa  \right) =0.
\end{align*}
On the other hand, since $u\in C(\overline{\O})$ and $u<0$ in $\O$ by \eqref{sign} and $\LH v$ is nonnegative almost everywhere by Corollary \ref{corricorri} (item $(i)$), we deduce that $\LH v$ needs to vanish almost everywhere. In particular we have
$$\LH v \equiv 0\qquad\mbox{ in }\O\smallsetminus \{|x|=0\}.$$
Let us now assume the second possibility: $v$ is locally bounded around $0$, which implies $v\in L^\infty(\O)$. Since $v$ is smooth outside the single point $0$, the inequality $\LH v \geq 0$ holds true in $\Omega\smallsetminus \{0\}$. Moreover, since $0\in\O$, the boundary of $\Omega\smallsetminus \{0\}$ is nothing bu $\de\O \cup \{0\}$. Hence, \eqref{constant} is saying that $v$ attains the boundary datum $c^2$ for any point of $\de\left( \Omega\smallsetminus \{0\} \right)$ except from $\{0\}$. Since $v$ is bounded and a set formed by a single point is a $\LH$-polar set, we can exploit the maximum principle in \cite[Theorem 11.2.7]{BLU} to infer that $v\leq c^2$ in $\Omega\smallsetminus \{0\}$. Inserting this information in the integral identity \eqref{medianulla}, and recalling that $\Fa$ is positive outside of $\{|x|=0\}$, we deduce that $v\equiv c^2$ in $\O\smallsetminus \{|x|=0\}$.\\
Under both the circumstances we considered, we have reached the conclusion
\begin{equation}\label{harmonik}
\LH v \equiv 0\qquad\mbox{ in }\O\smallsetminus \{|x|=0\}.
\end{equation}
We can now invoke \eqref{cyln}: recalling that the equality case in the matrix inequality \eqref{matrixineq} arises just for matrices which are multiple of the identity, and exploiting $\a<4$, if we substitute \eqref{harmonik} in \eqref{cyln} we infer
$$
\begin{cases}
M= \frac{{{\rm trace}}(M)}{n+1} \mathbb{I}_{n+1},  \\
W_\sigma = \frac{1}{2}\Fa,  \\
|x|^2W_t - t W_\sigma =0,
\end{cases}
$$
for any point in $\O$ with $\sigma>0$. Hence we have
$$\begin{cases}
W_\sigma(\sigma,t) = \frac{1}{2}\sigma (\sigma^2+t^2)^{\frac{\alpha-4}{4}},  \\
W_t(\sigma,t) =\frac{1}{2}t (\sigma^2+t^2)^{\frac{\alpha-4}{4}}.
\end{cases}$$
This implies the existence of a constant $k$ such that $W(\sigma,t)=\frac{1}{\a}(\sigma^2+t^2)^{\frac{\alpha}{4}} + k$, which means
$$
u(x,t)=\frac{1}{\a}(|x|^4+t^2)^{\frac{\alpha}{4}} + k.
$$
Since $u$ is continuous the previous identity holds true not only for $|x|>0$ but for any $(x,t)\in\overline{\Omega}$. It is then easy to conclude that $\O$ is a gauge-ball centered at the origin, and to complete the proof of the desired statement. 
\end{proof}

The next two results complete the proof of Theorem \ref{Tuno}.

\begin{corollary}\label{Cuno}
Fix either $n\geq 2$ and $\alpha\in (\frac{3}{4},4)$, or $n=1$ and $\a\in(2,4)$. Fix also $c>0$. Let $\Omega\subset \Hn$ be a competitor set in the sense of \eqref{competitor}. Assume that $\Omega$ is cylindrically symmetric, and $0\in\Omega$. If there exists a solution $u$ to \eqref{eq: overdet} in the sense of \eqref{solution}, then 
$$
\Omega=B_{R}(0)\qquad\mbox{with}\qquad R= c^{\frac{2}{\alpha}}
$$
and
$$
u(x,t)=\frac{\left(|x|^4+t^2\right)^{\frac{\alpha}{4}}- c^2}{\alpha}.
$$
\end{corollary}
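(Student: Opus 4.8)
The plan is to obtain Corollary \ref{Cuno} as an immediate consequence of Theorem \ref{Tunoalpha}. Indeed, the hypotheses of the two statements coincide almost entirely ($n\geq 1$, $\a\in(0,4)$, $\Omega$ a cylindrically symmetric competitor set with $0\in\Omega$, and the existence of a solution $u$), the sole difference being that Theorem \ref{Tunoalpha} additionally requires that, in a neighborhood of $0$, either $\LH v\in L^1$ or $v$ is bounded. Therefore the entire task reduces to checking, in each of the two parameter regimes under consideration, that one of these two local regularity conditions is met; and this is precisely the information encoded in Lemma \ref{regularity}.

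First I would treat the case $n\geq 2$ with $\a\in(\frac{3}{4},4)$. Here I would invoke item (ii) of Lemma \ref{regularity}, which asserts that $\LH v$ is locally in $L^1$ exactly for $\a\in(\frac{3}{4},4]$; the lower threshold $\a>\frac34$ is what guarantees the integrability of the most singular contribution to $\LH v$ near the $t$-axis. Since $0\in\Omega$ and $v$ is smooth away from $0$, this local $L^1$ bound around the origin upgrades to $\LH v\in L^1(\Omega)$, so the first alternative hypothesis of Theorem \ref{Tunoalpha} holds.

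Next I would treat the case $n=1$ with $\a\in(2,4)$. The key observation is that for $n=1$ the cylindrical symmetry \eqref{cylsim} coincides with the toric symmetry \eqref{torsim}, so $\Omega$ is in particular toric symmetric. I may then apply item (iii) of Lemma \ref{regularity}, which gives that $v$ is locally bounded near $0$ for $\a\in(2,4]$, thereby verifying the second alternative hypothesis of Theorem \ref{Tunoalpha}.

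In both regimes, Theorem \ref{Tunoalpha} then applies verbatim and delivers the conclusion $\Omega=B_R(0)$ with $R=c^{2/\a}$, together with the explicit expression for $u$. I do not expect any genuine obstacle at this stage: the analytic difficulty lives entirely in Lemma \ref{regularity} (the Calder\'on--Zygmund and Folland--Stein Sobolev estimates near the singular set), while the present argument merely has to match each parameter range to the correct item of that lemma. It is worth noting why the two regimes are split as they are: item (ii) requires $n\geq 2$, so for $n=1$ the $L^1$ route to $\LH v$ is unavailable, and one is forced onto the boundedness alternative of item (iii), whose threshold $\a>2$ accounts for the more restrictive interval in the one-dimensional case.
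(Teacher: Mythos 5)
Your proposal is correct and follows exactly the paper's own route: the paper proves Corollary \ref{Cuno} by combining Theorem \ref{Tunoalpha} with item $ii)$ of Lemma \ref{regularity} for $n\geq 2$, $\alpha\in(\frac{3}{4},4)$, and with item $iii)$ for $n=1$, $\alpha\in(2,4)$. Your additional remark that for $n=1$ the cylindrical symmetry \eqref{cylsim} coincides with the toric symmetry \eqref{torsim} (so that item $iii)$ is indeed applicable) is a correct and worthwhile explicit justification of a step the paper leaves implicit.
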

\begin{proof}
The case $n\geq 2$ and $\alpha\in (\frac{3}{4},4)$ follows from Theorem \ref{Tunoalpha} and item $ii)$ in Lemma \ref{regularity}. The case $n = 1$ and $\alpha\in (2,4)$ follows from Theorem \ref{Tunoalpha} and item $iii)$ in Lemma \ref{regularity}.
\end{proof}

\begin{theorem}\label{Tuno4}
Fix $n\geq 1$, $\alpha=4$, and $c>0$. Let $\Omega\subset \Hn$ be a competitor set in the sense of \eqref{competitor}. Assume that $\Omega$ is cylindrically symmetric. If there exists a solution $u$ to \eqref{eq: overdet} in the sense of \eqref{solution}, then there exists $t_0\in\R$ such that 
$$
\Omega=B_{\sqrt{c}}((0,t_0))\qquad\mbox{and}\qquad u(x,t)=\frac{|x|^4+(t-t_0)^2- c^2}{4}.
$$
\end{theorem}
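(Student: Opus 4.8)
The plan is to run the P-function scheme of Theorem \ref{Tunoalpha}, but to exploit the fact that for $\alpha=4$ the weight $\Fa=|x|^2$ is smooth and the problem is invariant under vertical translations $(x,t)\mapsto(x,t+\tau)$. As in the proof of Corollary \ref{corricorri}, cylindrical symmetry lets me write $u(x,t)=W(\sigma,t)$ with $\sigma=|x|^2$, and the P-function becomes
\[
v=4(\sigma^2+t^2)^{\frac{4-\alpha}{4}}\big(W_\sigma^2+W_t^2\big)-\alpha W=4\big(W_\sigma^2+W_t^2\big)-4W
\]
once $\alpha=4$. The first point I would stress is that, in contrast with the case $\alpha<4$, this $v$ has \emph{no} singularity: since $\Fa=|x|^2$ is smooth, $u$ is smooth in all of $\O$ by hypoellipticity, and by the Whitney-type fact for smooth $O(2n)$-invariant functions $W$ is smooth in $(\sigma,t)$ up to $\sigma=0$, so the factor $\Fa^{-1}$ in the definition of $v$ is exactly cancelled by $|D_Hu|^2=4\sigma(W_\sigma^2+W_t^2)$. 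Hence $v\in C^\infty(\O)\cap C(\overline\O)$. Since $\Fa\,\LH v\geq0$ off the axis by Corollary \ref{corricorri}(i) and $\LH v$ is continuous across $\{|x|=0\}$, by density $\LH v\geq0$ on all of $\O$, so $v$ is $\LH$-subharmonic throughout $\O$ and \emph{no} polar-set removal is needed, unlike in Theorem \ref{Tunoalpha}.

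Next I would show $v\equiv c^2$. The Dirichlet and Neumann data give $v=\frac{c^2\Fa}{\Fa}-0=c^2$ at every non-characteristic boundary point, and by continuity of $v$ up to $\overline\O$ the same value is attained at the characteristic points on the $t$-axis; hence $v=c^2$ on $\de\O$. The maximum principle for the subharmonic $v$ then yields $v\leq c^2$ in $\O$. Plugging this into the integral identity \eqref{medianulla}, which reads $\int_\O (v-c^2)\Fa=0$, and using $v-c^2\leq0$ together with $\Fa>0$ off the axis, I conclude $v\equiv c^2$ in $\O$, so that $\LH v\equiv0$ there.

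I would then read the rigidity off \eqref{cyln}. For $\alpha=4$ every term carrying the factor $(4-\alpha)$ drops, leaving
\[
\frac{\Fa}{16}\LH v=2\Big(\|M\|^2-\tfrac{1}{n+1}({\rm trace}(M))^2\Big)+\tfrac{2n+4}{n+1}\big(W_\sigma-\tfrac12\Fa\big)^2 .
\]
Crucially, the condition $|x|^2W_t-tW_\sigma=0$ present for $\alpha<4$ has \emph{vanished}; this is the analytic trace of the vertical-translation invariance and the reason the centre is free. Using \eqref{matrixineq}, $\LH v\equiv0$ forces $W_\sigma=\tfrac12\sigma$, whence $W(\sigma,t)=\tfrac14\sigma^2+h(t)$ for some smooth $h$. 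Substituting into $\LH u=(2n+4)\,\Fa$, i.e. $\sigma(W_{\sigma\sigma}+W_{tt})+nW_\sigma=\tfrac{n+2}{2}\sigma$, gives $h''\equiv\tfrac12$, so $h(t)=\tfrac14 t^2+at+b$; completing the square produces $t_0\in\R$ and a constant $k$ with $u(x,t)=\tfrac14\big(|x|^4+(t-t_0)^2\big)+k$. Since $u$ is continuous this holds on $\overline\O$, and $u\equiv0$ on $\de\O$ forces $\de\O=\de B_R((0,t_0))$ with $R=(-4k)^{1/4}$; as $u<0$ inside by \eqref{sign}, connectedness gives $\O=B_R((0,t_0))$. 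Finally, the Neumann datum $|D_Hu|=c|x|$ on $\de\O$ (equivalently the explicit formula \eqref{ua}) yields $c=R^2$, hence $R=\sqrt c$ and $k=-c^2/4$.

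I expect the only genuinely delicate point to be the passage from subharmonicity off the axis to subharmonicity across it, i.e. the smoothness of $v$ up to $\{|x|=0\}$: this rests on the exact cancellation $\Fa^{-1}|D_Hu|^2=4(W_\sigma^2+W_t^2)$ available only for $\alpha=4$, and on the smoothness of $W$ in $\sigma$ at $\sigma=0$. Once this is secured, the absence of a singular set makes the $\alpha=4$ argument, paradoxically, cleaner than the $\alpha<4$ one, and the extraction of the solution differs from Theorem \ref{Tunoalpha} only in the free integration constant producing the centre $(0,t_0)$.
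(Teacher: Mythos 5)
Your proposal is correct, and its architecture coincides with the paper's: smoothness of $u$, hence of $v=4(W_\sigma^2+W_t^2)-4W$, throughout $\Omega$ when $\alpha=4$; subharmonicity from Corollary \ref{corricorri} extended across the axis by continuity; the maximum principle giving $v\le c^2$; and the identity \eqref{medianulla} forcing $v\equiv c^2$, hence $\LH v\equiv 0$ --- this is precisely the second (``$v$ bounded'') alternative of Theorem \ref{Tunoalpha}, which the paper invokes verbatim, simplified by the absence of any polar-set removal. The one genuine difference is the endgame: from \eqref{cyln} at $\alpha=4$ the paper retains both pieces of information in \eqref{casesdue}, namely $W_\sigma=\tfrac12 F_4$ and the equality case $M=\frac{{\rm trace}(M)}{n+1}\mathbb{I}_{n+1}$ of \eqref{matrixineq}, and reads $W_{tt}=\tfrac12$ off the entries of this matrix identity combined with \eqref{traceformula}; you discard the matrix identity altogether and recover $W_{tt}=\tfrac12$ by feeding $W_\sigma=\tfrac12\sigma$ (hence $W_{\sigma\sigma}=\tfrac12$) back into the equation $\LH u=(2n+4)F_4$. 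Your shortcut is legitimate and slightly more economical in the cylindrical setting, but note it is special to this case: in the toric analogue (Theorem \ref{Tdue4}) the surviving scalar equation $U_1+U_2=F_4$ together with the PDE does not by itself yield $U_1=U_2$, and the matrix equality is genuinely needed, which is why the paper's proofs are organized around it. Finally, two points you make explicit that the paper leaves implicit, both correctly handled: the Whitney-type smoothness of $W$ in $(\sigma,t)$ up to $\sigma=0$ (needed to assert $v$ is smooth across the axis), and the concluding identification of $\Omega$ with $B_{\sqrt c}((0,t_0))$ via the sign of $u$, connectedness, and the Neumann datum giving $c=R^2$ consistently with \eqref{ua}.
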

\begin{proof}
Since $\alpha=4$ we are in the simpler situation of $u$ being $C^\infty$-smooth in $\Omega$. In particular, exploiting the cylndrical symmetry, also the function $v$ is smooth in $\Omega$ as it coincides with $4(W_\sigma^2+W_t^2)-4 W$. We can then proceed verbatim as in the proof of Theorem \eqref{Tunoalpha} (both proofs provided there work smoothly) in order to reach the conclusion \eqref{harmonik}. Since $\alpha=4$, when we substitute \eqref{harmonik} in \eqref{cyln}, we infer the identities
\begin{equation}\label{casesdue}
\begin{cases}
M= \frac{{{\rm trace}}(M)}{n+1} \mathbb{I}_{n+1},  \\
W_\sigma = \frac{1}{2}F_4,  
\end{cases}
\end{equation}
for any point in $\O$ with $\sigma>0$. Since (see \eqref{traceformula})
$$
{{\rm trace}}(M)=\frac{2n+4}{4}\sigma-\frac{3+n}{2}W_\sigma \qquad\mbox{ for }\alpha=4,
$$
if we insert the information $W_\sigma(\sigma,t)=\frac{1}{2}F_4=\frac{1}{2}\sigma$ in the definition of $M$ we deduce from \eqref{casesdue} that
$$\begin{cases}
W_\sigma(\sigma,t) = \frac{1}{2}\sigma,  \\
W_{tt}(\sigma,t) =\frac{1}{2}.
\end{cases}$$
This implies the existence of two constants $t_0, k \in\R$ such that
$$
W(\sigma,t)=\frac{\sigma^2+(t-t_0)^2}{4}+k
$$
for all points $(\sigma,t)$ with $\sigma>0$ such that $(x,t)\in\Omega$. Similarly to the proof of Theorem \eqref{Tunoalpha}, this shows that $\Omega$ is a gauge-ball centered at $(0,t_0)$ and completes the proof.
\end{proof}

We now consider the case of toric symmetric sets in $\H^2$. In the following theorem we treat the full range $(0,4)$ for the parameter $\alpha$ under a local integrability requirement for $\LH v$.

\begin{theorem}\label{Tduealpha}
Fix $n= 2$, $\alpha\in (0,4)$, and $c>0$. Let $\Omega\subset \H^{2}$ be a competitor set in the sense of \eqref{competitor}. Assume that $\Omega$ is toric symmetric, and $0\in\Omega$. Assume also that $\LH v$ is locally in $L^1$ around the points $\Omega \cap \{|x|=0\}$. If there exists a solution $u$ to \eqref{eq: overdet} in the sense of \eqref{solution}, then 
$$
\Omega=B_{R}(0)\qquad\mbox{with}\qquad R= c^{\frac{2}{\alpha}}
$$
and
$$
u(x,t)=\frac{\left(|x|^4+t^2\right)^{\frac{\alpha}{4}}- c^2}{\alpha}.
$$
\end{theorem}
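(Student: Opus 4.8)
The plan is to follow the same architecture established in the proof of Theorem \ref{Tunoalpha}, specializing to $n=2$ and invoking the subharmonicity and integrability results already proved for toric symmetric sets in $\H^2$. First I would note that, since $\Omega$ is toric symmetric, the solution $u$ inherits the toric symmetry by the uniqueness argument given in the proof of Corollary \ref{corricorri}, so we may write $u(x,t)=U(s,t)$ for a smooth function $U$ on $\Omega\smallsetminus\{0\}$. The function $v$ in \eqref{Pfunction} is then smooth on $\Omega\smallsetminus\{|x|=0\}$ and satisfies $v\equiv c^2$ on $\partial\Omega$ by the Dirichlet and Neumann data, exactly as in \eqref{constant}.

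The core computation is the Green-type integral identity. Under the hypothesis that $\LH v\in L^1$ locally around the points of $\Omega\cap\{|x|=0\}$, we have $\LH v\in L^1(\Omega)$, so I would reproduce verbatim the chain of equalities in the proof of Theorem \ref{Tunoalpha}: integrating $(-u)\LH v$ by parts, using that $u$ is a defining function for $\Omega$, the boundary value $v\equiv c^2$, the Neumann condition \eqref{extra}, and finally the average identity \eqref{medianulla}, to conclude that
\begin{equation*}
\int_\Omega(-u)\LH v\,d\xi=(Q+\alpha-2)\left(c^2\int_\Omega\Fa-\int_\Omega v\Fa\right)=0.
\end{equation*}
Since $-u>0$ in $\Omega$ by \eqref{sign} and $\LH v\geq 0$ almost everywhere by Corollary \ref{corricorri}(ii) (which applies precisely because $n=2$ and $\Omega$ is toric symmetric), the integrand is nonnegative and must vanish, giving $\LH v\equiv 0$ in $\Omega\smallsetminus\{|x|=0\}$, i.e.\ the analogue of \eqref{harmonik}.

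With $\LH v\equiv 0$ in hand, I would feed this into the pointwise identity \eqref{tordue} (the $n=2$ case of \eqref{magik}). Because every term on the right-hand side of \eqref{tordue} is a nonnegative multiple of a square for $\alpha\in(0,4)$, and the coefficients of the non-matrix terms are strictly positive (here the strict inequality $\alpha<4$ is essential, since several coefficients carry the factor $4-\alpha$), vanishing of $\LH v$ forces each square to vanish separately. This yields $M=\frac{{\rm trace}(M)}{3}\mathbb{I}_3$ together with $U_1=U_2=\frac12\Fa$ and the relations $|x|^2U_t-tU_j=0$; in particular $U$ depends on $s$ only through $\sigma=s_1+s_2=|x|^2$, and one recovers $W_\sigma=\frac12\Fa$, $W_t=\frac12 t(\sigma^2+t^2)^{\frac{\alpha-4}{4}}$ as in Theorem \ref{Tunoalpha}. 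Integrating gives $u(x,t)=\frac{1}{\alpha}(|x|^4+t^2)^{\frac{\alpha}{4}}+k$, and continuity up to $\overline{\Omega}$ together with $u\equiv 0$ on $\partial\Omega$ identifies $\Omega$ as the gauge ball $B_R(0)$ with $R=c^{2/\alpha}$ and $k=-c^2/\alpha$.

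The main obstacle is the integrability hypothesis on $\LH v$ near the singular set $\{|x|=0\}$, which is exactly why it is assumed rather than derived here: for $n=2$ the threshold analysis of Lemma \ref{regularity}(ii) only guarantees $\LH v\in L^1$ locally for $\alpha\in(\frac34,4]$, so the full range $\alpha\in(0,4)$ cannot be handled unconditionally by that lemma. The alternative route used in Theorem \ref{Tunoalpha} — assuming instead that $v$ is locally bounded and invoking the polar-set maximum principle — is available for toric sets when $\alpha\in(2,4]$ by Lemma \ref{regularity}(iii), but not below $\alpha=2$; hence the local $L^1$ assumption on $\LH v$ is the natural weakest hypothesis that lets the Green-identity step go through across the whole interval $(0,4)$, and everything else in the proof is a faithful transcription of the $n=2$ specialization of the machinery already developed.
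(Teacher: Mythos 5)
Your overall frame (symmetry inheritance, the boundary value of $v$, and the final rigidity step via \eqref{tordue} and \eqref{casestre}) matches the paper, but the central step does not, and this is where your argument has a genuine gap. You propose to ``reproduce verbatim'' the Green-identity chain $\int_\O(-u)\LH v=\cdots=0$ from Theorem \ref{Tunoalpha}. That chain rests on Green's second identity
$\int_\O\left(v\LH u-u\LH v\right)=\int_{\de\O}v\,|D_Hu|\,d\sigma_H$,
which in the cylindrical case is legitimate because there the factor $\sigma$ in $\Fa$ cancels and $v=4(\sigma^2+t^2)^{\frac{4-\alpha}{4}}(W_\sigma^2+W_t^2)-\alpha W$ is smooth in $\O\smallsetminus\{0\}$ and up to $\de\O$: the only singular point is the origin. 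In the toric case with $n=2$ this cancellation fails: $v=4(\sigma^2+t^2)^{\frac{4-\alpha}{4}}\bigl(U_t^2+\sum_j\frac{s_j}{\sigma}U_j^2\bigr)-\alpha U$ carries the factors $s_j/\sigma$, which are discontinuous along the \emph{entire} axis $\{|x|=0\}$ (e.g. $s_1/\sigma\to 1$ along $s_2=0$ but $\to 0$ along $s_1=0$), and $|D_Hv|$ blows up like $|x|^{-1}$ there. So the divergence theorem cannot be applied over $\O$ directly; one would have to excise a tube $\{|x|<\epsilon\}$ around the whole axis and show the flux of $vD_Hu-uD_Hv$ through $\{|x|=\epsilon\}$ vanishes. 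Away from the origin this is controllable ($|D_Hu|\lesssim|x|$, $v$ bounded), but near the origin the hypothesis $\LH v\in L^1$ gives no bound on $v$ or $D_Hv$ for general $\alpha\in(0,4)$, and the step breaks down. Relatedly, your assertion that $v\equiv c^2$ on all of $\de\O$ ``exactly as in \eqref{constant}'' is not accurate: at the characteristic boundary points on the axis, $v$ need not extend continuously, a point the paper treats explicitly.

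The paper's proof of Theorem \ref{Tduealpha} deliberately avoids Green's identity. It combines $v\in L^1_{\rm loc}$ (Lemma \ref{regularity}, item $i)$), the assumed $\LH v\in L^1_{\rm loc}$, and the pointwise inequality $\LH v\geq 0$ off the axis (Corollary \ref{corricorri}, item $(ii)$) to get $\LH v\geq 0$ in the sense of distributions; it then invokes \cite[Theorem 8.2.15]{BLU} to replace $v$ by a genuinely $\LH$-subharmonic representative $\tilde v$, shows $\tilde v$ is bounded and attains $c^2$ at every boundary point off the axis, observes that $\de\O\cap\{|x|=0\}$ consists of finitely many points (hence an $\LH$-polar set), and applies the maximum principle \cite[Theorem 11.2.7]{BLU} to conclude $\tilde v\leq c^2$ in $\O$. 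Only then does \eqref{medianulla} enter, forcing $v\equiv c^2$ and hence $\LH v\equiv 0$ off the axis; the sign of $u$ and the integral $\int_\O(-u)\LH v$ are never used. To repair your write-up you should replace the Green-identity step by this maximum-principle argument (or else supply the missing flux estimates near the axis and the origin, which the stated hypotheses do not appear to yield). Your endgame — feeding $\LH v\equiv0$ into \eqref{tordue} to obtain \eqref{casestre} and integrating — is correct and agrees with the paper, as does your closing explanation of why the local $L^1$ assumption is the natural hypothesis on the full range $\alpha\in(0,4)$.
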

\begin{proof} 
As in the proof of Corollary \ref{corricorri}, we can assume that $u(x,t)=U(s, t)$ using our notations. Since $u$ is smooth outside of $0\in\O$, the same holds for $U$. As noted above, in such variables the function $v$ in \eqref{Pfunction} takes the form
$$
4(\sigma^2+t^2)^{\frac{4-\a}{4}}\left(U^2_t(s,t)+\sum_{j=1}^n \frac{s_j}{\sigma} U^2_j(s,t)\right)-\a U.
$$
In particular such function is smooth outside of the $t$-axis, it is bounded in a neighborhood of $\de\O$, and it is continuous up to the points of the boundary sitting outside of the $t$-axis where it attains the constant value $c^2$.\\
Moreover, we know from item $i)$ in Lemma \ref{regularity} that $v\in L^1_{{\rm{loc}}}(\O)$. Since by assumption also $\LH v \in L^1_{{\rm{loc}}}(\O)$ and we know from item $(ii)$ in Corollary \ref{corricorri} that $\LH v \geq 0$ holds in a pointwise sense in $\O\smallsetminus\{|x|=0\}$, then $\LH v \geq 0$ in $\O$ in the weak sense of distributions. By Theorem \cite[Theorem 8.2.15]{BLU} we have the existence of a function $\tilde{v}$ such that $\tilde{v}=v$ almost everywhere and $\tilde{v}$ is $\LH$-subharmonic (in the sense of potential theory, i.e. it is upper semi-continuous and it stays below its solid average in the sense of \eqref{mean}; see \cite[Chapter 8]{BLU}). The function $\tilde{v}$ coincides with $v$ where $v$ is continuous, and in particular $\tilde{v}$ attains with continuity the boundary datum $c^2$ at every point in $\de\Omega\smallsetminus\{|x|=0\}$. Furthermore, we notice that $\tilde{v}$ is bounded in $\Omega$: around interior points the boundedness is a consequence of being sub-average (and $v\in L^1$), whereas in a neighborhood of the boundary we can exploit the fact that $v$ is bounded in order to bound uniformly the solid average of $v$. Lastly, we stress that the set $\de\Omega\smallsetminus\{|x|=0\}$ is formed by a finite number of isolated points: as a matter of fact, if we could take a sequence of points in $\de\Omega\smallsetminus\{|x|=0\}$ we would have by compactness a point in $\de\Omega\smallsetminus\{|x|=0\}$ with $\de_t$ as a tangential direction and this is not possible since these points have to be characteristic for $\de\Omega$ and $\de_t$ has to be the normal direction at these points (recall that $u$ is a defining function \eqref{defining}, and at these points $|D_H u|=0$). We can then apply the maximum principle in \cite[Theorem 11.2.7]{BLU} (recall that countable sets are $\LH$-polar) to the function $\tilde{v}$, and we deduce that $\tilde{v}\leq c^2$ in $\Omega$.\\
On the other hand, we know from \eqref{medianulla} that
$$
\int_\Omega (\tilde{v}-c^2)\Fa=\int_\Omega (v-c^2)\Fa=0.
$$
As in the proof of Theorem \ref{Tunoalpha}, we deduce that $\tilde{v}\equiv v\equiv c^2$ in $\Omega\smallsetminus \{|x|=0\}$. In particular
\begin{equation}\label{harmonik2}
\LH v \equiv 0\qquad\mbox{ in }\O\smallsetminus \{|x|=0\}.
\end{equation}
We can then plug \eqref{harmonik2} in \eqref{tordue}, and (using $\alpha<4$) we conclude the validity of
\begin{equation}\label{casestre}
\begin{cases}
M= \frac{{{\rm trace}}(M)}{3} \mathbb{I}_{3},  \\
U_1+U_2 = \Fa,  \\
(x_1^2+x_{2+1}^2)U_1+(x_2^2+x_{2+2}^2)U_2 + tU_t= \frac{1}{2}\rho^\alpha\\
tU_1+t U_2 =2|x|^2 U_t,\\
U_1=U_2,
\end{cases}
\end{equation}
for the points with $s_1+s_2>0$. Since \eqref{casestre} implies that $U_1(s,t)=U_2(s,t)=\frac{s_1+s_2}{2}((s_1+s_2)^2+t^2)^{\frac{\alpha-4}{4}}$ and $U_t(s,t)=\frac{t}{2}((s_1+s_2)^2+t^2)^{\frac{\alpha-4}{4}}$, we can conclude as in Theorem \ref{Tunoalpha}.
\end{proof}

With the next two final results we finish the proof of Theorem \ref{Tdue}.

\begin{corollary}\label{Cdue}
Fix $n= 2$, $\alpha\in (\frac{3}{4},4)$, and $c>0$. Let $\Omega\subset \H^{2}$ be a competitor set in the sense of \eqref{competitor}. Assume that $\Omega$ is toric symmetric, and $0\in\Omega$. If there exists a solution $u$ to \eqref{eq: overdet} in the sense of \eqref{solution}, then 
$$
\Omega=B_{R}(0)\qquad\mbox{with}\qquad R= c^{\frac{2}{\alpha}}
$$
and
$$
u(x,t)=\frac{\left(|x|^4+t^2\right)^{\frac{\alpha}{4}}- c^2}{\alpha}.
$$
\end{corollary}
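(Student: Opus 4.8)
The plan is to reduce the statement to Theorem \ref{Tduealpha}, which already establishes the desired characterization for toric symmetric sets in $\H^2$ across the entire range $\alpha\in(0,4)$, \emph{provided} one knows a priori that $\LH v$ is locally in $L^1$ around the points of $\Omega\cap\{|x|=0\}$. The only discrepancy between that theorem and the present corollary is exactly this local integrability requirement near the singular $t$-axis, and the sole purpose of restricting to $\alpha\in(\frac{3}{4},4)$ is to guarantee it for free.

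First I would recall that item $ii)$ of Lemma \ref{regularity}, valid for any weak solution $u$ to \eqref{weak} in a competitor set, asserts that $\LH v$ is locally in $L^1$ whenever $n\geq 2$ and $\alpha\in(\frac{3}{4},4]$. In our situation $n=2$ and $\alpha\in(\frac{3}{4},4)\subset(\frac{3}{4},4]$, so this conclusion applies directly and supplies precisely the hypothesis missing from Theorem \ref{Tduealpha}. Invoking that theorem, with the toric symmetry and the condition $0\in\Omega$ assumed here, then forces $\Omega=B_R(0)$ with $R=c^{2/\alpha}$ and pins down $u$ to the explicit gauge-radial profile, which is the claimed conclusion.

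I do not anticipate a genuine obstacle at this stage: the argument is a direct composition of two previously proved results, entirely parallel to the way Corollary \ref{Cuno} is deduced from Theorem \ref{Tunoalpha} together with Lemma \ref{regularity}. The only point I would double-check is that the threshold $\alpha>\frac{3}{4}$ --- which in Lemma \ref{regularity} arises from the H\"older integrability of the critical term $\Fa^{-2}X_k\Fa\,X_kX_ju\,X_ju$, integrable exactly when $\frac{3}{2n}+\frac{2-\frac{3}{4}}{Q}+\frac{1-\frac{3}{4}}{Q}\leq 1$ for $n\geq 2$ --- is compatible with the range requested here. Since $\alpha$ ranges over the open interval $(\frac{3}{4},4)$, which sits strictly inside $(\frac{3}{4},4]$, there is no borderline value to handle and the reduction goes through cleanly.
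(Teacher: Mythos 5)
Your proposal is correct and matches the paper's own proof exactly: the paper also deduces Corollary \ref{Cdue} by combining Theorem \ref{Tduealpha} with item $ii)$ of Lemma \ref{regularity}, which supplies the local $L^1$ integrability of $\LH v$ for $n=2$ and $\alpha\in(\frac{3}{4},4]$. No further comment is needed.
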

\begin{proof}
It follows by combining Theorem \ref{Tduealpha} with item $ii)$ in Lemma \ref{regularity}.
\end{proof}

\begin{theorem}\label{Tdue4}
Fix $n= 2$, $\alpha=4$, and $c>0$. Let $\Omega\subset \H^2$ be a competitor set in the sense of \eqref{competitor}. Assume that $\Omega$ is toric symmetric. If there exists a solution $u$ to \eqref{eq: overdet} in the sense of \eqref{solution}, then there exists $t_0\in\R$ such that 
$$
\Omega=B_{\sqrt{c}}((0,t_0))\qquad\mbox{and}\qquad u(x,t)=\frac{|x|^4+(t-t_0)^2- c^2}{4}.
$$
\end{theorem}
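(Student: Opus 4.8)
The plan is to run the argument of Theorem \ref{Tduealpha} in the degenerate case $\alpha=4$, where several simplifications occur, and then to overcome the one genuinely new difficulty that this value of $\alpha$ produces. Since $\Fa=F_4=|x|^2$ is smooth, the solution $u$ is $C^\infty$ up to the origin and the $t$-axis, and by the toric symmetry (as in Corollary \ref{corricorri}) I may write $u(x,t)=U(s,t)$ for a smooth $U$. The associated $P$-function $v$ is then smooth off $\{|x|=0\}$; moreover it is locally bounded near the axis by item $iii)$ of Lemma \ref{regularity} (here $\alpha=4>2$), satisfies $\LH v\in L^1_{\rm loc}$ by item $ii)$, and $\LH v\geq 0$ off the axis by item $(ii)$ of Corollary \ref{corricorri}.

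First I would reproduce the potential-theoretic step of Theorem \ref{Tduealpha}: the inequality $\LH v\geq 0$ holds in the sense of distributions on $\Omega$, its $\LH$-subharmonic representative $\tilde v$ is bounded and attains $c^2$ on $\partial\Omega$ away from the finitely many characteristic points (which lie on the axis and form an $\LH$-polar set), the maximum principle of \cite[Theorem 11.2.7]{BLU} gives $\tilde v\leq c^2$, and the average identity \eqref{medianulla} upgrades this to $v\equiv c^2$, so that $\LH v\equiv 0$ in $\Omega\smallsetminus\{|x|=0\}$. Substituting into \eqref{tordue} and using $\alpha=4$ annihilates every term carrying the factor $(4-\alpha)$, leaving only
$$2\left(\|M\|^2-\tfrac13({\rm trace}(M))^2\right)+\tfrac23\left(U_1+U_2-\Fa\right)^2=0,$$
whence $M=\frac{{\rm trace}(M)}{3}\mathbb{I}_3$ and $U_1+U_2=\Fa=\sigma$.

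This is exactly where $\alpha=4$ behaves differently from \eqref{casestre}: the coefficient of $(U_1-U_2)^2$ in \eqref{tordue} also vanishes at $\alpha=4$, so the symmetry $U_1=U_2$ is no longer granted. From $M=\lambda\mathbb{I}_3$ I can still read off $U_{1t}=U_{2t}=0$ and $U_{12}=\frac{U_1+U_2}{2\sigma}=\frac12$, hence $U_{11}=U_{22}=\frac12$ by differentiating $U_1+U_2=\sigma$, and finally $U_{tt}=\frac12$ from the diagonal identity $M_{33}=M_{11}$. Integrating this Hessian leaves a one-parameter ambiguity,
$$U(s,t)=\frac{\sigma^2+(t-t_0)^2}{4}+a\,(s_1-s_2)+k,$$
and a direct substitution shows $v\equiv 4(a^2-k)$ for \emph{every} choice of $a$. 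Thus the $P$-function cannot by itself detect the gauge symmetry, and the main obstacle is to prove $a=0$.

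To eliminate $a$ I would appeal to the competitor hypothesis at the characteristic points. Since $u$ is a defining function and the characteristic points lie on the axis where $U_t\neq 0$, near such a point $(0,t_*)$ the boundary is the Euclidean graph
$$t-t_*=-\frac{a}{U_t(0,t_*)}\,(s_1-s_2)+O(|x|^4),$$
whose quadratic part $\propto a\,(x_1^2+x_3^2-x_2^2-x_4^2)$ is an indefinite Euclidean form. On the other hand, near its pole every gauge sphere is tangent to the horizontal hyperplane to second order, i.e.\ its graph carries no $O(|x|^2)$ term. The simultaneous presence of an interior and an exterior tangent gauge ball at $(0,t_*)$ squeezes $\partial\Omega$ between two such graphs and forces the $O(|x|^2)$ coefficient to vanish, so $a=0$. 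Then $u(x,t)=\frac{|x|^4+(t-t_0)^2}{4}+k$ with $c^2=-4k$, and $\Omega=\{u<0\}=B_{\sqrt c}((0,t_0))$, as claimed. I expect this last step --- extracting the gauge symmetry from the geometry at the characteristic points, rather than from $\LH v$ --- to be the delicate heart of the argument, since it is precisely what separates the true gauge balls (which meet the axis) from the spurious toric profiles generated by $a\neq 0$.
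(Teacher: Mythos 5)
Your proposal agrees with the paper's proof up to the derivation of \eqref{casesquattro}, and the point where you then depart from it is exactly where the paper's own argument is defective, so the departure is warranted. The paper passes from \eqref{casesquattro} to $U_1=U_2=\frac{s_1+s_2}{2}$ via a displayed matrix identity whose diagonal entries are $s_j\left(U_{jj}-\frac{1}{2}\right)+\frac{1}{2}U_j$; but the matrix $M$ of \eqref{defmatrix} (with $\alpha=4$, hence $D_1=E_1$) has diagonal entries $s_jU_{jj}-\frac{s_jU_j}{\sigma}+\frac{1}{2}U_j$, and the two agree only if $U_j=\frac{\sigma}{2}$, which is what was to be proven. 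Your one-parameter family $U=\frac{\sigma^2+(t-t_0)^2}{4}+a(s_1-s_2)+k$ settles the matter: it solves $\mathcal{L}U=2\sigma$, it satisfies both relations in \eqref{casesquattro} (indeed $M=\left(\frac{\sigma}{4}+a\,\frac{s_2-s_1}{2\sigma}\right)\mathbb{I}_3$), and its $P$-function is the constant $4(a^2-k)$; in particular \eqref{tordue} cannot by itself exclude $a\neq 0$, and the matrix displayed in the paper's proof cannot be the one occurring in \eqref{tordue}, since for your family it would make the right-hand side of \eqref{tordue} strictly positive while the left-hand side vanishes. Your repair at a characteristic point is also correct, including a sub-step you left implicit: a gauge ball tangent to $\de\O$ at an axis point, where the normal is vertical, is automatically centered on the axis (writing the contact point as $\xi_0\circ(y,s)$, verticality of the normal reads $|y|^2y=s\,Jx_0$; at $(0,t_*)$ this becomes $-|x_0|^2x_0=(t_*-\tau)Jx_0$, whence $x_0=0$ after pairing with $x_0$), so it touches at one of its poles, where its graph is quartically flat, and the squeeze between interior and exterior tangent balls does force $a=0$.

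The genuine gap is that your final step presupposes that characteristic points exist, i.e.\ that $\ol{\O}$ meets the $t$-axis; nothing in the hypotheses of Theorem \ref{Tdue4} guarantees this for a toric symmetric set, and this residual case cannot be closed by any argument, because the statement itself fails there. Indeed, fix $a>0$, $0<k<a^2$, and let $u_a(x,t)=\frac{|x|^4+t^2}{4}+a\left(x_1^2+x_3^2-x_2^2-x_4^2\right)+k$ and $\O_a=\{u_a<0\}$. Then $\O_a$ is a nonempty, smooth, bounded, connected, toric symmetric open set with $\ol{\O_a}\cap\{|x|=0\}=\emptyset$: its only critical values are $k>0$ (on the axis, so the axis is disjoint from $\ol{\O_a}$) and $k-a^2<0$ (on the torus $s_1=0$, $s_2=2a$, $t=0$, which lies inside $\O_a$ and shows nonemptiness), so $\de\O_a$ is smooth; connectedness follows since in the $(s_1,s_2,t)$ variables the set $\{U<0\}$ is a subgraph in the $s_1$-direction over a disc in the $(s_2,t)$-plane. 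It has no characteristic points, so the tangent-ball requirement in \eqref{competitor} is vacuous; $u_a$ is smooth on $\ol{\O_a}$, solves $\LH u_a=8|x|^2=(Q+\alpha-2)F_4$, vanishes on $\de\O_a$, and satisfies there $|D_Hu_a|^2=|x|^2\left(|x|^4+t^2+4a(s_1-s_2)+4a^2\right)=4(a^2-k)\,|x|^2$, i.e.\ the Neumann condition with $c=2\sqrt{a^2-k}>0$. So $\O_a$ verifies all the stated hypotheses without being a gauge ball. Consequently both your proof and the theorem require the additional hypothesis $\ol{\O}\cap\{|x|=0\}\neq\emptyset$ (the analogue of the assumption $0\in\O$ made for $\alpha<4$); once that is assumed, characteristic points do exist and your argument, unlike the one printed in the paper, completes the proof correctly.
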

\begin{proof}
Verbatim proceeding as in the proof of Theorem \ref{Tduealpha}, we establish the validity of $\LH v \equiv 0$ in $\Omega \smallsetminus \{|x|=0\}$. Since $\alpha=4$, when we exploit the identity $\LH v=0$ in \eqref{tordue}, we obtain
\begin{equation}\label{casesquattro}
\begin{cases}
M= \frac{{{\rm trace}}(M)}{3} \mathbb{I}_{3},  \\
U_1+U_2 = F_4.  
\end{cases}
\end{equation}
If we now use $U_1(s,t)+U_2(s,t)=s_1+s_2$ in \eqref{traceformula}, we have
$$
{{\rm trace}}(M)=\frac{3}{2}\left(\sigma- \frac{s_1 U_1+s_2U_2}{\sigma}\right).
$$
Hence, the matrix equality in \eqref{casesquattro} reads as
\begin{align*}
&\begin{pmatrix}
s_1\left(U_{11}-\frac{1}{2}\right)+\frac{1}{2} U_1 & \sqrt{s_1s_2}\left(U_{12}-\frac{1}{2}\right) & \sqrt{s_1\sigma} U_{1t} \\
\sqrt{s_1s_2}\left(U_{12}-\frac{1}{2}\right) & s_2\left(U_{22}-\frac{1}{2}\right)+\frac{1}{2} U_2 & \sqrt{s_2\sigma} U_{2t} \\
\sqrt{s_1\sigma} U_{1t} & \sqrt{s_2\sigma} U_{2t} & \sigma U_{tt}-\frac{s_1 U_1+s_2 U_2}{2\sigma}
\end{pmatrix}\\
&= \left(\frac{\sigma}{2}- \frac{s_1 U_1+s_2U_2}{2\sigma}\right)\begin{pmatrix}
1 & 0 & 0 \\
0 & 1 & 0 \\
0 & 0 & 1
\end{pmatrix}.
\end{align*}
The combination of the previous matrix identity with the relation $U_1(s,t)+U_2(s,t)=s_1+s_2$ (and the smoothness of $U$) yields $U_{11}=U_{22}=U_{12}=\frac{1}{2}$ and we finally obtain the validity of
$$\begin{cases}
U_1(s,t) = \frac{s_1+s_2}{2},  \\
U_2(s,t) = \frac{s_1+s_2}{2}, \\
U_{tt}(s,t) =\frac{1}{2}.
\end{cases}$$
We can then finish the proof as in Theorem \ref{Tuno4}.
\end{proof}


\begin{thebibliography}{10}

\bibitem{ASZ} 
D. Aharonov, M. M. Schiffer, L. Zalcman, \textit{Potato kugel}, Israel J. Math. 40 (1981) 331-339.

\bibitem{BiCu} I. Birindelli, A. Cutr\`{\i}, \textit{A semi-linear problem for the Heisenberg Laplacian}, Rend. Sem. Mat. Univ. Padova 94 (1995) 137--153.

\bibitem{BiPr} 
I. Birindelli, J. Prajapat, \textit{Nonlinear Liouville theorems in the Heisenberg group via the moving plane method}, Comm. Partial Differential Equations 24 (1999) 1875--1890.

\bibitem{BLU}
A. Bonfiglioli, E. Lanconelli, F. Uguzzoni, \textit{Stratified Lie groups and potential theory for their sub-Laplacians}, Springer Monographs in Mathematics (2007), Springer, Berlin.

\bibitem{BoLa}
A. Bonfiglioli, E. Lanconelli, \textit{Subharmonic functions in sub-Riemannian settings}, J. Eur. Math. Soc. (JEMS) 15 (2013) 387--441.

\bibitem{BFM}
T. P. Branson, L. Fontana, C. Morpurgo, \textit{Moser-Trudinger and Beckner-Onofri's inequalities on the $\operatorname{CR}$ sphere}, Ann. of Math. (2) 177 (2013) 1--52.

\bibitem{BH}
F. Brock, A. Henrot, \textit{A symmetry result for an overdetermined elliptic problem using continuous rearrangement and domain derivative}, Rend. Circ. Mat. Palermo (2) 51 (2002) 375--390.

\bibitem{Br}
F. Brock, \textit{Symmetry for a general class of overdetermined elliptic problems}, NoDEA Nonlinear Differential Equations Appl. 23 (2016), art. 36.

\bibitem{CDPT}
L. Capogna, D. Danielli, S. D. Pauls, J. T. Tyson, \textit{An introduction to the Heisenberg group and the sub-Riemannian isoperimetric problem}, Progress in Mathematics 259 (2007), Birkh\"{a}user Verlag, Basel.

\bibitem{CrSp}
P. S. Crooke, R. P. Sperb, \textit{Isoperimetric inequalities in a class of nonlinear eigenvalue problems}, SIAM J. Math. Anal. 9 (1978) 671--681.

\bibitem{DvEPs}
M. Dom\'{\i}nguez-V\'{a}zquez, A. Enciso, D. Peralta-Salas, \textit{Solutions to the overdetermined boundary problem for semilinear equations with position-dependent nonlinearities}, Adv. Math. 351 (2019) 718--760.

\bibitem{Fo73}
G. B. Folland, \textit{A fundamental solution for a subelliptic operator}, Bull. Amer. Math. Soc. 79 (1973) 373--376.

\bibitem{FL}
R. L. Frank, E. H. Lieb, \textit{Sharp constants in several inequalities on the Heisenberg group}, Ann. of Math. (2) 176 (2012) 349--381.

\bibitem{GL90} 
N. Garofalo, E. Lanconelli, \textit{Frequency functions on the Heisenberg group, the uncertainty principle and unique continuation}, Ann. Inst. Fourier (Grenoble) 40 (1990) 313--356.

\bibitem{GL92} 
N. Garofalo, E. Lanconelli, \textit{Existence and nonexistence results for semilinear equations on the Heisenberg group}, Indiana Univ. Math. J. 41 (1992) 71--98.

\bibitem{GVpreprint} N. Garofalo, D. Vassilev, \textit{Overdetermined problems in groups of Heisenberg type: conjectures and partial results}, preprint.

\bibitem{Ga77}
B. Gaveau, \textit{Principe de moindre action, propagation de la chaleur et estim{\'e}es sous elliptiques sur certains groupes nilpotents}, Acta Math. 139 (1977) 95--153.

\bibitem{GMT} C. Guidi, V. Martino, G. Tralli, \textit{A characterization of gauge balls in $\mathbb{H}^n$ by horizontal curvature}, to appear in Adv. Calc. Var., DOI: \verb|10.1515/acv-2022-0058|.

\bibitem{HL} J. Hounie, E. Lanconelli, \textit{An Alexandrov type theorem for Reinhardt domains of {${\mathbb C}^2$}}, in ``Recent progress on some problems in several complex variables and partial differential equations'', Contemp. Math. 400 (2006) 129--146. 

\bibitem{Juno}
D. Jerison, \textit{The Dirichlet problem for the Kohn Laplacian on the Heisenberg group. {I}}, J. Functional Analysis 43 (1981) 97--142.

\bibitem{Jdue}
D. Jerison, \textit{The Dirichlet problem for the Kohn Laplacian on the Heisenberg group. {II}}, J. Functional Analysis 43 (1981) 224--257.

\bibitem{JL87}
D. Jerison, J. M. Lee, \textit{The Yamabe problem on CR manifolds}, J. Differential Geom. 25 (1987) 167--197.

\bibitem{JL88}
D. Jerison, J. M. Lee, \textit{Extremals for the Sobolev inequality on the Heisenberg group and the CR Yamabe problem}, J. Amer. Math. Soc. 1 (1988) 1--13.

\bibitem{KN}
J. J. Kohn, L. Nirenberg, \textit{Non-coercive boundary value problems}, Comm. Pure Appl. Math. 18 (1965) 443--492.

\bibitem{KV}
A. Kor\'anyi, S. V\'agi, \textit{Singular integrals on homogeneous spaces and some problems of classical analysis}, Ann. Scuola Norm. Sup. Pisa Cl. Sci. (3) 25 (1971) 575--648.

\bibitem{L13} 
E. Lanconelli, \textit{``Potato kugel'' for sub-Laplacians}, Israel J. Math. 194 (2013) 277--283.

\bibitem{Lions} 
P. L. Lions, \textit{Two geometrical properties of solutions of semilinear problems}, Applicable Anal. 12 (1981) 267--272.

\bibitem{MaOu}
X.-N. Ma, Q. Ou, \textit{A Liouville theorem for a class semilinear elliptic equations on the Heisenberg group}, Adv. Math. 413 (2023), paper no. 108851.

\bibitem{MTh} 
V. Martino, G. Tralli, \textit{On the Hopf-Oleinik lemma for degenerate-elliptic equations at characteristic points},
Calc. Var. Partial Differential Equations (2016) 55: 115.

\bibitem{MTj} 
V. Martino, G. Tralli, \textit{A Jellett type theorem for the Levi curvature}, J. Math. Pures Appl. (9) 108 (2017) 869--884.

\bibitem{Mo} 
R. Monti, \textit{Heisenberg isoperimetric problem. The axial case}, Adv. Calc. Var. 1 (2008) 93--121.

\bibitem{NV} 
I. Netuka, J. Vesely, \textit{Mean value property and harmonic functions}, in ``Classical and modern potential theory and applications ({C}hateau de {B}onas, 1993)'', NATO Adv. Sci. Inst. Ser. C: Math. Phys. Sci. 430 (1994) 359--398.

\bibitem{Ono} 
M. Onodera, \textit{Dynamical approach to an overdetermined problem in potential theory}, J. Math. Pures Appl. (9) 106 (2016) 768--796.

\bibitem{RiRo}
M. Ritor{\'e}, C. Rosales, \textit{Rotationally invariant hypersurfaces with constant mean curvature in the Heisenberg group {$\Hn$}}, J. Geom. Anal. 16 (2006) 703--720.

\bibitem{Ser} 
J. Serrin, \textit{A symmetry problem in potential theory}, Arch. Rational Mech. Anal. 43 (1971) 304--318. 

\bibitem{Sh} 
H. Shahgholian, \textit{Diversifications of Serrin's and related symmetry problems}, Complex Var. Elliptic Equ. 57 (2012) 653--665.

\bibitem{LU97}
F. Uguzzoni, E. Lanconelli, \textit{On the Poisson kernel for the Kohn Laplacian}, Rend. Mat. Appl.(7) 17 (1997) 659--677.

\bibitem{U}
F. Uguzzoni, \textit{Estimates of the Green function for X-elliptic operators}, Math. Ann. 361 (2015) 169--190.

\bibitem{wei} 
H. F. Weinberger, \textit{Remark on the preceding paper of Serrin}, Arch. Rational Mech. Anal. 43 (1971) 319-320.

\end{thebibliography}
\end{document}